\newtheorem{proposition}{Proposition}
\newtheorem{lemma}{Lemma}
\newtheorem{theorem}{Theorem}
\newtheorem{corollary}{Corollary}
\newtheorem*{proposition*}{Proposition}
\newtheorem*{lemma*}{Lemma}
\newtheorem*{theorem*}{Theorem}
\newtheorem*{corollary*}{Corollary}
\newtheorem*{definition*}{Definition}
\newtheorem*{remark*}{Remark}
\newtheorem*{example*}{Example}
\begin{document}

\begin{center}
{\LARGE On Leibniz  algebras with  maximal  cyclic  subalgebras}
\end{center}

\begin{center}
V.A.~Chupordia$^1$, L.A.~Kurdachenko$^1$, I.Ya.~Subbotin$^2$

$^1$ Department of Algebra and Geometry, Oles Honchar Dnipro National University, Gagarin prospect 72, Dnipro 10, 49010, Ukraine

$^2$ Mathematics Department, National University, 5245 Pacific Concourse Drive, Los Angeles, CA 90045, USA

\end{center}

\textbf{Abstract:} We begin to study the structure of Leibniz algebras having maximal cyclic subalgebras.

\textbf{Keywords:} Leibniz algebra, Lie algebra, ideal, cyclic Leibniz algebra, derivation.

\textbf{Classification:} 17A32, 17A60, 17A99.

	\section{Introduction}

Let $L$ be an algebra over a field $F$ with the binary operations $+$ and $[-,-]$. Then $L$ is called a \textit{left  Leibniz  algebra} if it satisfies the left Leibniz identity

$$[[a,b], c] = [a, [b,c]] - [b,[a,c]]$$

for all $a, b, c \in L$. We will also use another form of this identity:

$$[a, [b, c]] = [[a, b], c]+[b, [a, c]].$$

Leibniz algebras first appeared in the paper of A.~Bloh~\cite{BA1965}, but the term "Leibniz algebra" appears in the book of J.L.~Loday~\cite{LJ1992} and in the article of J.L.~Loday~\cite{LJ1993}. In~\cite{LP1993} J.~Lodey and T.~Pirashvili began the systematic study of properties of Leibniz algebras. The theory of Leibniz algebras has developed very intensively in many different directions. Some of the results of this theory were presented in the book~\cite{AOR2020}. Note that the class of Lie algebras is a subclass of the class of Leibniz algebras. Conversely, if  $L$  is a Leibniz algebra, in which the identity $[a, a] = 0$ is valid for every element  $a \in L$, then it is a Lie algebra. The question about those properties of Leibniz algebras that Lie algebras do not have and, accordingly, about those types of Leibniz algebras that have essential differences from Lie algebras naturally arises. A lot has already been done in this direction. This can be illustrated, for example, by finite-dimensional Leibniz algebras. Their description and the structure of the already described finite-dimensional Leibniz algebras are much more complicated than the structure of Lie algebras of the corresponding dimension (see the book~\cite{AOR2020} and the paper~\cite{YaV2019}). As a rule, the description of finite-dimensional Leibniz algebras is reduced to finding their structural constants. However, knowledge of structural constants does not always give a relief picture of the structure of the corresponding finite-dimensional Leibniz algebra. Here you can refer to the experience of finite groups. In the theory of finite groups, the description of finite groups based on the specification of their definite properties has turned out to be much more fruitful than the description of groups of fixed orders. Therefore, it seems natural to use this approach in the theory of Leibniz algebras. Cyclic Leibniz algebras are among the simplest ones. Their description was obtained in~\cite{CKSu2017}. As a consequence of this description, a description of the "minimal Leibniz algebras," that is, Leibniz algebras, whose proper subalgebras are Lie algebras and Leibniz algebras, whose proper subalgebras are abelian was obtained. This approach turned out to be quite effective. We will not do any review of the results here, we just make links to surveys~\cite{KKPS2017,KSeSu2020}, and the papers~\cite{CKSe2020, KOP2016, KOS2019, KSeSu2017, KSeSu2018, KSuSe2018, KSuSe2018A,  KSY2020A, YaV2019}.

One of the first steps in the theory of finite groups was the study of groups which were close to abelian, and in particular, to cyclic groups. Finite groups, having a maximal cyclic subgroup, have been described long time ago (see, for example, the book~\cite[\S 1]{BeYa2008}). For Leibniz algebras, the question about the structure of a Leibniz algebra having a maximal cyclic subalgebra naturally arises. The study of such Leibniz algebras begins in this work. In contrast to finite groups, the situation here is much more varied. Note at least the circumstance that cyclic Leibniz algebras (with the exception of one-dimensional ones) are non-Abelian. The first case to be considered here is the case of nilpotent Leibniz algebra. Since infinite dimensional cyclic Leibniz algebra is not nilpotent, in our case a Leibniz algebra must has finite dimension. First our main result gives a description of such algebras.

\begin{theorem}\label{therem A}
Let  $L$  be a nilpotent Leibniz algebra of finite dimension $n + 1 \geq 3$ over a field  $F$. Suppose that  $L$  includes a maximal cyclic subalgebra  $K$. Then  $L$  is an algebra of one of the following types: 

(i)  $L = K \oplus \left\langle  d \right\rangle $  where $[d, d] = 0$, $[K, d] = [d, K] = \left\langle 0\right\rangle $, $K = Fa_1 \oplus Fa_2 \oplus \ldots \oplus Fa_n$, $[a_1, a_1] = a_2$, $[a_1, a_{j - 1}] = a_j$, $3 \leq j \leq n$, $[a_1, a_n] = 0$, $[a_m, a_k] = 0$  for all $m > 1$, $1 \leq k \leq n$.

(ii) $L = K + \left\langle  d \right\rangle $  where $\left\langle  d \right\rangle  = Fd \oplus F[d, d]$, $[d, [d, d]] = 0$, $[K, \left\langle  d \right\rangle ] = [\left\langle  d \right\rangle , K] = \left\langle 0\right\rangle $, $K = Fa_1 \oplus Fa_2 \oplus \ldots \oplus Fa_n$, $[a_1, a_1] = a_2$, $[a_1, a_{j - 1}] = a_j$, $3 \leq j \leq$ n, $[a_1, a_n] = 0$, $[a_m, a_k] = 0$  for all $m > 1$, $1 \leq k \leq n$.

(iii)  $L = K \oplus \left\langle  s \right\rangle $ where  $[s, s] = 0$, $K = Fa_1 \oplus Fa_2 \oplus \ldots \oplus Fa_n$, $[a_1, a_1] = a_2$, $[a_1, a_{j - 1}] = a_j$, $3\leq j \leq n$, $[a_1, a_n] = 0$, $[a_m, a_k] = 0$  for all $m > 1$, $1 \leq k \leq n$, 
\begin{align*}
	\left[ s, a_1\right]  &=& a_t + \gamma_{t + 1} a_{t + 1} + \gamma_{t + 2} a_{t + 2} +  \ldots + \gamma_{n - 1} a_{n - 1}   +  \gamma_{n}a_n,\\	
	\left[ s, a_2\right]  &=&     a_{t + 1}  + \gamma_{t + 1}a_{t + 2} + \ldots + \gamma_{n - 2}a_{n - 1}  +  \gamma_{n - 1}a_{n},\\
	\left[ s, a_3\right]  &=&  a_{t + 2}  + \ldots + \gamma_{n - 3}a_{n - 1}    +   \gamma_{n - 2}a_n,\\
	\ldots &\ldots \ldots \\
	\left[ s, a_{n - t + 1}\right]  &=&                                           a_n,
\end{align*}
$[s, a_j ] =  0$  for  $j > n - t + 1$, 
$[a_1, s] = \tau a_{n - t}$, $[a_j, s] = 0 $ whenever  $j \geq 2$  for some coefficients  $\gamma_{t + 1}, \ldots , \gamma_{n - 1}, \gamma_n,\tau \in F. $

\end{theorem}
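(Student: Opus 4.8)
The plan is to reduce everything to the action of a single complementary element on the fixed cyclic ideal $K$. First I would observe that, since $L$ is nilpotent and $K$ is maximal, $K$ is a two-sided ideal of codimension one: by a Frattini-type argument every maximal subalgebra of a nilpotent Leibniz algebra contains $[L,L]$, so $[L,L]\subseteq K$, whence $[L,K]\subseteq K$ and $[K,L]\subseteq K$, and $\dim(L/K)=1$. Thus I may write $L=K+Fd$ with $d\notin K$, and by \cite{CKSu2017} the ideal $K$ has the stated basis $a_1,\dots,a_n$ with $a_2=[a_1,a_1]$, $a_j=[a_1,a_{j-1}]$, $[a_1,a_n]=0$ and $[a_m,a_k]=0$ for $m\ge 2$. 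All of $[d,K]$, $[K,d]$ and $[d,d]$ then lie in $K$, so the classification amounts to pinning these three pieces of data down.

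Next I would extract the constraints. Because $L$ is a left Leibniz algebra, $L_d\colon x\mapsto[d,x]$ is a derivation of $L$, and its restriction $D=L_d|_K$ is a derivation of $K$ that is nilpotent since $L$ is. I would show that the nilpotent derivations of $K$ are precisely those with $D(a_1)\in[K,K]$, determined by $D(a_1)$ through $D(a_j)=[a_1,D(a_{j-1})]$; this already forces $[d,a_j]$ to have the ``shifted'' form and to vanish for large $j$. For the opposite products I would use the square-annihilator identity $[[x,x],y]=0$ together with an induction to get $[a_j,d]=0$ for all $j\ge 2$, and the Leibniz identity to see $[a_1,d]=:w\in[K,K]$. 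Likewise, since $L/K$ is one-dimensional and nilpotent, hence abelian, the square $[d,d]$ lies in $K$; being a square it also lies in the left annihilator of $L$, which meets $K$ in exactly $[K,K]$, so $[d,d]\in[K,K]$. Finally, expanding $[[a_1,d],d]$ by the Leibniz identity and using $[[K,K],d]=0$ yields the single compatibility relation $[d,[a_1,d]]=[a_1,[d,d]]$, i.e. $D(w)=[a_1,[d,d]]$.

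The classification then splits according to whether a complement can be chosen with $D=L_d|_K=0$. If $d$ must act nontrivially on $K$, we are in type~(iii): I would rescale $a_1$ and $d$ so that the leading term of $D(a_1)$ is $a_t$, read off the shifted expressions for $[d,a_j]$, use the compatibility relation to locate $[a_1,d]$, and translate $d$ by a suitable element of $K$ to arrange $[d,d]=0$. If instead $D$ can be made zero, then $[d,K]=0$; I would then adjust $d$ by an element of $K$ to kill the surviving reverse product as well, giving $[K,d]=0$, and separate the two remaining possibilities by the value of the square: $[d,d]=0$ yields the central complement of type~(i), while $[d,d]\neq 0$ makes $\langle d\rangle=Fd\oplus F[d,d]$ a two-dimensional cyclic subalgebra meeting $K$, which is type~(ii).

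The main obstacle I anticipate is the normalization within type~(iii). One must classify the nilpotent derivation $D$ up to the automorphisms of the cyclic algebra $K$ (the rescalings and reparametrizations of the generator $a_1$) while \emph{simultaneously} forcing the reverse product $[a_1,d]$ and the square $[d,d]$ into canonical position; in particular, removing $[d,d]$ by a change of complement amounts to solving a polynomial condition over $F$, and one must check that the surviving coefficients $\gamma_{t+1},\dots,\gamma_n,\tau$ are the genuine remaining invariants. A final finite verification that the normalized structure constants satisfy the full left Leibniz identity on the basis $\{a_1,\dots,a_n,d\}$ then confirms that each of the three listed types actually occurs.
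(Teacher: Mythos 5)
Your overall architecture runs parallel to the paper's: $K$ is an ideal of codimension one with the standard cyclic basis; the left multiplication $\mathbf{l}_d$ restricted to $K$ is analysed as a derivation whose values are forced into the ``shifted'' form (the paper's Lemma~3); the products $[a_j,d]$ for $j\geq 2$ and the square $[d,d]$ are controlled through $\textbf{Leib}(L)=\textbf{Leib}(K)\leq\zeta^{left}(L)$; and a final translation of $d$ by an element of $K$ kills the square. All of that is sound. The genuine gap is in your case division, specifically in the step ``if $D$ can be made zero, then $[d,K]=0$; I would then adjust $d$ by an element of $K$ to kill the surviving reverse product as well, giving $[K,d]=0$.'' That adjustment is not available. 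Once you insist on $D=\mathbf{l}_d|_K=0$, the only replacements $d\mapsto d+k$ preserving this have $k\in A=Fa_2\oplus\cdots\oplus Fa_n$ (adding $\mu a_1$ with $\mu\neq 0$ changes $[d,a_1]$ by $\mu a_2$ and destroys $D=0$), and such a $k$ only moves $[a_1,d]$ inside $[a_1,A]=Fa_3\oplus\cdots\oplus Fa_n$; hence a nonzero $a_2$-component of $[a_1,d]$ can never be removed. A concrete witness: take $K$ cyclic nilpotent of dimension $n\geq 3$ and adjoin $d$ with $[d,L]=\left\langle 0\right\rangle$, $[a_1,d]=a_2$, $[a_j,d]=0$ for $j\geq 2$. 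One checks the left Leibniz identity holds, $L$ is nilpotent, and $K$ is maximal cyclic; a complement with $D=0$ exists (namely $d$ itself), yet no complement satisfies $[K,d]=[d,K]=\left\langle 0\right\rangle$. This algebra is in fact of type~(iii) with $t=2$ and $\tau=0$ (take $s=-d+a_1$), so your dichotomy ``can $D$ be made zero'' does not separate types (i)--(ii) from type (iii), and your easy branch silently drops algebras.

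The paper avoids this by normalizing in the opposite order (its Lemma~7): since $L/\textbf{Leib}(L)$ is a two-dimensional nilpotent Lie algebra, hence abelian, one can \emph{always} first choose the complement $d$ so that the right action vanishes, $[K,d]=\left\langle 0\right\rangle$, and only then split on whether the left action $[d,K]$ is zero. That order of operations is the one that works: the right action can always be killed, the left action cannot. Your sketch of the type~(iii) normalization suffers from the same inversion --- in the paper $[a_1,s]=\tau a_{n-t}$ is not ``located by the compatibility relation'' but is exactly the residue reintroduced into the right action when one translates $d$ by $\delta_n a_{n-t+1}$ to kill $[d,d]=\delta_n a_n$ (a linear, always solvable condition over any $F$, not a polynomial obstruction as you fear). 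Reorder your normalization and the rest of your plan goes through.
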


The following result begins an examination of the non-nilpotent case.

\begin{theorem}\label{theorem B}
	Let $L$ be a non-nilpotent Leibniz algebra of finite dimension $n + 1 \geq 3$ over a field $F$. Suppose that  $L$  includes an ideal  $K$ of codimension  $1$. If  $K$  as a subalgebra is cyclic and nilpotent, then  $L$  has an element  $d$  such that  
	
	$L = K \oplus  Fd$  where $K = Fa_1 \oplus Fa_2 \oplus \ldots \oplus Fa_n$, $[a_1, a_1] = a_2$, $[a_1, a_{j - 1}] = a_j$, $3 \leq j \leq n$, $[a_1, a_n] = 0$, $[a_m, a_k] = 0$  for all $m > 1$, $1\leq k\leq n$; 

	$[a_1, d] = - a_1$, $[a_j, d] = 0$ whenever  $j \geq 2$; 
	\begin{align*}	
		\left[ d, a_1\right]  &=& a_1 + \gamma_2 a_2 +  \gamma_3 a_3 + \ldots + \gamma_{n - 1} a_{n - 1}  + \gamma_n a_n,\\
		\left[ d, a_2\right] &=&      2 a_2 + \gamma_2 a_3 + \ldots + \gamma_{n - 2}a_{n - 1}    +  \gamma_{n - 1}a_n,\\
		\left[ d, a_3\right]  &=&          3 a_3  + \ldots + \gamma_{n - 3} a_{n - 1}    + \gamma_{n - 2}a_n,\\
	 	\ldots &\ldots \ldots \\
		\left[ d, a_{n - 1}\right]  &=&             (n - 1)a_{n - 1}  + \gamma_2 a_n,\\
		\left[ d, a_n\right]  &=&                                               n a_n, 
	\end{align*}
	
	$[d, d] = - (\gamma_3 a_2 + \gamma_4 a_3 + \ldots + \gamma_n a_{n - 1}) + \delta_n a_n,$
for some coefficients  $\gamma_2$, $\gamma_3$, $\ldots$ , $\gamma_{n - 1}$, $\gamma_n$, $\delta_n$ $\in F$. 
	
\end{theorem}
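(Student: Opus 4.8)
The plan is to lean on two standard features of a left Leibniz algebra: every left multiplication $\mathrm{ad}_x=[x,-]$ is a derivation, and the squares $[x,x]$ span an ideal $\mathrm{Leib}(L)$ every element of which left-annihilates $L$ (since $[[x,x],y]=0$). First I would fix the canonical basis $a_1,\dots,a_n$ of the cyclic nilpotent ideal $K$ supplied by \cite{CKSu2017}, so that $\mathrm{ad}_{a_1}$ acts on $K$ as the shift $a_j\mapsto a_{j+1}$ and $\mathrm{ad}_{a_i}=0$ on $L$ for $i\ge 2$. I then choose any $d_0\in L\setminus K$, so that $L=K\oplus Fd_0$ as a vector space, and observe that $L/K$, being a one-dimensional Leibniz algebra, is abelian; hence $[L,L]\subseteq K$ and in particular $[d_0,d_0]\in K$.

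The first real step is to pin down the right action of $d_0$. Since $a_2=[a_1,a_1]\in\mathrm{Leib}(L)$ and, by induction, $a_j=[a_1,a_{j-1}]\in[L,\mathrm{Leib}(L)]\subseteq\mathrm{Leib}(L)$ for $j\ge 3$, every such $a_j$ left-annihilates $L$; in particular $[a_j,d_0]=0$ for $j\ge 2$. Because $[d_0,d_0]\in\mathrm{Leib}(L)$ we have $[[d_0,d_0],a_1]=0$, and writing $[d_0,d_0]=\sum_i e_ia_i$ this forces $e_1=0$, so $[d_0,d_0]\in\langle a_2,\dots,a_n\rangle$; the same argument gives $\mathrm{Leib}(L)=\langle a_2,\dots,a_n\rangle$.

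Next I would analyze the derivation $D=\mathrm{ad}_{d_0}$ on $K$. Writing $A=\mathrm{ad}_{a_1}|_K$ and $[d_0,a_1]=\sum_i\beta_ia_i$, the left Leibniz identity yields $DA-AD=\mathrm{ad}_{[d_0,a_1]}=\beta_1A$, because $\mathrm{ad}_{a_i}=0$ for $i\ge 2$. Iterating this relation on the generator gives $D(a_j)=A^{\,j-1}(Da_1)+(j-1)\beta_1a_j=j\beta_1a_j+(\text{terms of higher index})$, so $D$ is triangular with diagonal $\beta_1,2\beta_1,\dots,n\beta_1$. Here non-nilpotency enters: if $\beta_1=0$ then $D$ strictly raises the index and is nilpotent, which together with the nilpotency of $K$ would make $L$ nilpotent; hence $\beta_1\ne 0$, and replacing $d_0$ by $d=\beta_1^{-1}d_0$ normalizes the diagonal to $1,2,\dots,n$ and produces the displayed formulas for $[d,a_j]$, the common off-diagonal data being recorded as the $\gamma_i$.

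Finally, to fix $[a_1,d]$ and $[d,d]$ I would use that $[x,y]+[y,x]\in\mathrm{Leib}(L)$ for all $x,y$. Applied to $x=a_1$, $y=d$ this shows the $a_1$-coefficient of $[a_1,d]$ equals $-1$; adding a suitable $k\in K$ to $d$ (which leaves the diagonal of $D$ untouched, since $[k,a_j]=k_1a_{j+1}$) removes the remaining components and brings $[a_1,d]$ to the normal form $-a_1$. The components of $[d,d]$ are then determined by the Leibniz identity for the triple $(a_1,d,d)$, namely $[a_1,[d,d]]=[[a_1,d],d]+[d,[a_1,d]]$, which expresses the $e_i$ through the $\gamma_j$ and leaves only the top coefficient $\delta_n$ free. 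I expect the main obstacle to be exactly this last stage: the right action $[a_1,d]$, the derivation values $[d,a_j]$, and the square $[d,d]$ are coupled through the Leibniz identities, so the adjustment of $d$ must be carried out compatibly with all of them at once, after which one still has to check that the resulting multiplication table satisfies every instance of the Leibniz identity, confirming that no further relations among the $\gamma_j$ and $\delta_n$ are forced.
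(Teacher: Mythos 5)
Your proposal is correct, and its skeleton coincides with the paper's: decompose $L=K\oplus Fd_0$, identify $\textbf{Leib}(L)=Fa_2\oplus\cdots\oplus Fa_n$ so that $[a_j,d]=0$ for $j\ge 2$, show that the left action of $d$ on $K$ is triangular with diagonal $\beta_1,2\beta_1,\ldots,n\beta_1$, invoke non-nilpotency to conclude $\beta_1\ne 0$, normalise, translate $d$ by an element of $K$ to force $[a_1,d]=-a_1$, and finally pin down $[d,d]$ from $[a_1,[d,d]]=[[a_1,d],d]+[d,[a_1,d]]$ --- this last step is verbatim the paper's. Where you differ is in how the intermediate facts are justified, and your versions are arguably cleaner. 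The paper derives the triangular form of $\textbf{l}_d$ from a separate statement (Lemma~\ref{lem 3}), proved by first showing that derivations preserve the upper central series of $K$ and then comparing coefficients in $f(a_k)=f([a_1,a_{k-1}])$ step by step; your operator identity $DA-AD=\textbf{l}_{[d_0,a_1]}=\beta_1A$ (valid because $\textbf{l}_{a_i}=0$ for $i\ge 2$) gives $DA^{j-1}=A^{j-1}D+(j-1)\beta_1A^{j-1}$ and hence the whole multiplication table at once. You also compute $\textbf{Leib}(L)$ directly from $[[x,x],a_1]=0$ instead of going through the paper's Lemma~\ref{lem 6}, and you replace the paper's appeal to \cite{RCGHHZ2014} (``$L/\textbf{Leib}(L)$ nilpotent implies $L$ nilpotent'') by a direct lower-central-series argument. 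That last point is the only one where your wording should be tightened: ``$D$ nilpotent together with $K$ nilpotent'' is not by itself a reason for $L$ to be nilpotent --- the non-nilpotent algebras of the theorem are exactly the counterexamples. What does work, and what your computation in fact supplies, is that when $\beta_1=0$ every left multiplication of $L$ strictly raises the index, so $\gamma_{k+1}(L)\subseteq Fa_{k+1}+\cdots+Fa_n$ and $\gamma_{n+1}(L)=\left\langle 0\right\rangle$; with that one line added the argument is complete. Your closing remark about verifying all Leibniz identities concerns the converse (realisability of every parameter choice), which neither the theorem nor the paper's proof claims.
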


The case of a field of characteristic $0$ is quite specific. The following result is devoted to it.

\begin{theorem}\label{theorem C}
Let $L$ be a non-nilpotent Leibniz algebra of finite dimension $n + 1 \geq 3$ over a field $F$. Suppose that  $\textbf{char}(F) = 0$,  and  $L$  includes an ideal $K$ of codimension  $1$. If  $K$  as a subalgebra is cyclic and nilpotent, then  $L$  satisfies the following conditions:

$L = K \oplus  \left\langle  s \right\rangle $  where  $[s, s] = 0$ (that is  $\left\langle  s \right\rangle  = Fs$),

$K$  has a basis  $\left\lbrace b_1, b_2,  \ldots  , b_n \right\rbrace $ such that  $[b_1, b_1] = b_2$, $[b_1, b_{j - 1}] = b_j$, $3 \leq j \leq n$, $[b_1, b_n] = 0$,    

$[b_m, b_k] = 0$  for all $m > 1$, $1 \leq k \leq n$; 

$[b_1, s] = - b_1$, $[b_j, s] = 0$ whenever  $j \geq 2$;
 
$[s, b_1] = b_1$, $[s, b_2] = 2b_2$, $[s, b_3] = 3b_3$,  \ldots , $[s, b_{n - 1}] = (n - 1)b_{n - 1}$, $[s, b_n] = nb_n$.  
\end{theorem}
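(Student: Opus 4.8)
The plan is to bootstrap from Theorem~\ref{theorem B}: it already puts $L$ in the form $L = K \oplus Fd$ with a cyclic generating basis $a_1, \dots, a_n$ of $K$ and an explicit upper-triangular left action of $d$. The two things left to extract are that the left action of a suitable ``Lie-type'' element becomes \emph{exactly} diagonal with eigenvalues $1, \dots, n$, and that this element may be chosen with square zero. Characteristic $0$ will enter three times, each time through the distinctness or nonvanishing of the integers $1, \dots, n$.

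First I would record that in a left Leibniz algebra every left multiplication $L_x\colon y \mapsto [x,y]$ is a derivation; this is just the left Leibniz identity. Applying Theorem~\ref{theorem B}, the restriction $\mathcal D = L_d|_K$ is a derivation of $K$ whose matrix in the basis $a_1, \dots, a_n$ is upper triangular with diagonal $1, 2, \dots, n$. Since $\textbf{char}(F) = 0$ these eigenvalues are pairwise distinct, so $\mathcal D$ is diagonalizable. I would take $b_1$ to be the eigenvector for eigenvalue $1$, normalized as $b_1 = a_1 + w$ with $w \in K^{2} = \langle a_2, \dots, a_n\rangle = [K,K]$, and then set $b_2 = [b_1, b_1]$ and $b_j = [b_1, b_{j-1}]$. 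Because $b_1 \equiv a_1 \pmod{K^{2}}$ and $K$ is cyclic nilpotent, $b_1$ again generates $K$, so $\{b_1, \dots, b_n\}$ is a basis realizing the required cyclic structure; and since $\mathcal D$ is a derivation with $\mathcal D(b_1) = b_1$, an immediate induction gives $[d, b_j] = j\,b_j$. Moreover $[b_j, d] = 0$ for $j \ge 2$ persists, because each such $b_j$ lies in $K^{2} = \langle a_2, \dots, a_n\rangle$, which Theorem~\ref{theorem B} annihilates under right multiplication by $d$, while $[b_1, d] = -a_1 = -b_1 + w$.

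Next I would kill the square $[d,d]$. By Theorem~\ref{theorem B}, $z := [d,d]$ lies in $K^{2}$, and on $K^{2}$ the derivation $\mathcal D$ acts with the nonzero eigenvalues $2, \dots, n$, hence invertibly (again using $\textbf{char}(F) = 0$). So there is a unique $v \in K^{2}$ with $\mathcal D(v) = -z$, and for $s := d + v$ one computes $[s,s] = z + \mathcal D(v) = 0$, using $[v,d] = [v,v] = 0$ since $v \in K^{2}$. As $v \in K^{2}$ also gives $[v, k] = 0$ for all $k \in K$, the left action is unchanged: $[s, b_j] = [d, b_j] = j\,b_j$, and likewise $[b_j, s] = [b_j, d] = 0$ for $j \ge 2$. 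Thus all asserted relations hold except possibly $[b_1, s] = -b_1$; the direct computation only yields $[b_1, s] = -b_1 + w'$ for some $w' \in K^{2}$ (namely $w' = w + [b_1,v]$).

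The crux, and the step I expect to be the only real obstacle, is showing this leftover $w'$ is forced to vanish. Here I would feed $[s,s] = 0$ into the left Leibniz identity: expanding $[s, [b_1, s]]$ by the identity gives $[[s,b_1], s] + [b_1, [s,s]] = [b_1, s] = -b_1 + w'$, while substituting $[b_1,s] = -b_1 + w'$ directly gives $-b_1 + [s, w']$. Comparing, $[s, w'] = w'$, i.e. $w'$ is an eigenvector of $L_s|_K$ for eigenvalue $1$. But $w' \in K^{2} = \langle b_2, \dots, b_n\rangle$, on which $L_s$ acts with eigenvalues $2, \dots, n$; writing $w' = \sum_{i \ge 2} w'_i b_i$ forces $(i-1)w'_i = 0$, whence $w'_i = 0$ for every $i \ge 2$ because $\textbf{char}(F)=0$. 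Therefore $w' = 0$ and $[b_1, s] = -b_1$. Collecting these relations with $L = K \oplus Fs$ and $[s,s]=0$ yields exactly the presentation in the statement.
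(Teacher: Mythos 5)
Your proposal is correct; it reaches the paper's normal form by a mirrored route, so a brief comparison is worth recording. Both proofs start from Theorem~\ref{theorem B}, subtract an element of $\textbf{Leib}(L)$ from $d$ to kill $[d,d]$ (possible because a triangular system with diagonal $2,\ldots,n$ is nonsingular when $\textbf{char}(F)=0$), replace $a_1$ by a generator $b_1\equiv a_1 \pmod{[K,K]}$, and then use the left Leibniz identity together with $[s,s]=0$ to transfer an exact eigenvalue relation from one side of the bracket to the other. The difference is which side is arranged by construction: the paper takes $b=a_1+[a_1,x]$ explicitly, so that $[b,s]=-b$ holds on the nose, and then extracts $[s,b]=b$ from $[[s,b],s]=-[s,b]$; you take $b_1$ to be the eigenvector of the derivation $\textbf{l}_d|_K$ for the eigenvalue $1$, so that $[s,b_1]=b_1$ holds on the nose, and then extract $[b_1,s]=-b_1$ by showing the error term $w'\in[K,K]$ satisfies $[s,w']=w'$ and hence vanishes, since $\textbf{l}_s$ acts on $[K,K]$ with eigenvalues $2,\ldots,n$. (In fact your $b_1$ and the paper's $b$ are the same element, namely the unique eigenvector for $1$ normalized modulo $[K,K]$.) Your packaging of the linear algebra via diagonalizability and invertibility of triangular derivations is cleaner than the paper's explicit systems and transition matrix, and it isolates the three places where characteristic $0$ enters; the paper's computation has the minor advantage of exhibiting the coefficients $\lambda_j$ explicitly. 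All the steps you flag check out, including the crux: $[s,[b_1,s]]=[[s,b_1],s]+[b_1,[s,s]]$ with $[s,b_1]=b_1$ and $[s,s]=0$ does force $[s,w']=w'$ and so $w'=0$.
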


\section{Nilpotent  Leibniz  algebras with maximal  cyclic  subalgebras}

A Leibniz algebra  $ L $  has a specific ideal. Denote by $ \textbf{Leib}(L) $ the subspace generated by the elements  $ [a, a] $, $ a \in L $. 

It is possible to show that  $ \textbf{Leib}(L) $  is an ideal of  $ L $,  and  if  $ H $  is an ideal of  $ L $  such that  $ L/H $  is a Lie algebra, then  $ \textbf{Leib}(L) \leq H $. 

The ideal  $ \textbf{Leib}(L) $  is called the \textit{Leibniz  kernel}  of algebra  $ L $. 

We note the following important property of the Leibniz kernel: 
$$
[[a, a], x] = 0 \text{ for arbitrary elements }   a, x  \in L.
$$

The \textit{left}  (respectively \textit{right}) \textit{center}  $ \zeta^{left}(L) $  (respectively  $ \zeta^{right}(L) $)  of a Leibniz algebra  $ L $  is defined by the rule 

$$
\zeta^{left}(L) = \left\lbrace  x \in L \,| \, [x, y] = 0 \text{ for each element } y \in L \right\rbrace.
$$

(respectively 
$$
\zeta^{right}(L) = \left\lbrace  x \in L \,| \, [y, x] = 0  \text{ for each element }  y \in L \right\rbrace ).
$$ 

It is not hard to prove that the left center of  $ L $  is an ideal, moreover,  $ \textbf{Leib}(L) \leq \zeta^{left}(L) $, so that  $ L/\zeta^{left}(L) $  is a Lie algebra. In general, the left and the right centers are different, moreover, the left center is an ideal, but it is not true for the right center: See the corresponding example in~\cite{KOP2016}.

The \textit{center}  $ \zeta(L)  $ of  $ L $  is defined by the rule: 
$$
\zeta(L) = \left\lbrace  x \in L \, | \, [x, y] = 0 = [y, x] \text{ for each element }  y  \in L \right\rbrace.
$$

The center is an ideal of  $ L $. 

Let  $ L $  be a Leibniz algebra. Define the lower central series of  $ L $  

$$
L = \gamma_1(L) \geq \gamma_2(L) \geq \ldots \geq \gamma_{\alpha}(L) \geq \gamma_{\alpha + 1}(L) \geq \ldots \gamma_{\delta}(L) 
$$
by the following rule: $ \gamma_1(L) = L $, $ \gamma_2(L) = [L, L] $, recursively  $ \gamma_{\alpha + 1}(L) = [L, \gamma_{\alpha}(L)] $  for all ordinals $ \alpha $,  and  $ \gamma_{\lambda}(L) = \bigcap_{ \mu < \lambda} \gamma_{\mu}(L) $  for the limit ordinals  $ \lambda $. The last term  $ \gamma_{\delta}(L) $  is called the \textit{lower hypocenter} of  $ L $. We have $ \gamma_{\delta}(L) = [L, \gamma_{\delta}(L)] $.   

If  $ \alpha = k  $ is a positive integer, then $  \gamma_k(L) = [L, [L, \ldots [ L , L]\ldots]] $  is the \textit{left normed commutator}  of  $ k $  copies of  $ L $. 

As usually, we say that a Leibniz algebra $ L $ is called \textit{nilpotent}, if there exists a positive integer $ k $  such that  $ \zeta_k(L) = \left\langle 0\right\rangle  $. More precisely, $ L $  is said to be \textit{nilpotent of nilpotency class} $ c $  if  $ \gamma_{c + 1}(L) = \left\langle 0\right\rangle  $, but  $ \gamma_c(L) \neq \left\langle 0\right\rangle  $. We denote the nilpotency class of  $ L $  by  $ \textbf{ncl}(L) $. 

Let  $ L $  be a Leibniz algebra over a field  $ F $  and  a  be an element of  $ L $. Put  
$$
\textbf{ln}_1(a) = a, \textbf{ln}_2(a) = [a, a],  \textbf{ln}_{k + 1}(a) = [a,  \textbf{ln}_k(a)], k \in \mathbb{N}.
$$
These elements are called the \textit{left normed commutators of the element}  $ a $.   

It will be useful to remind the following properties of these elements. These results have been proved in~\cite{CKSu2017}.

\begin{proposition}
Let  $ L $  be a Leibniz algebra  and  $ a $  be an element of  $ L $. Then the following assertion holds: 

(i)  $ [\textbf{ln}_k(a), \textbf{ln}_j(a)] = 0 $,  whenever  $ k > 1 $ and $ j \geq 1 $; in particular $ [\textbf{ln}_k(a), a] = 0  $. 

(ii)  Every non-zero product of $ k $ copies of an element $ a $  with any bracketing is coincides with  $ \textbf{ln}_k(a) $. 

(iii)  The cyclic subalgebra  $ \left\langle  a \right\rangle  $  is generated as a subspace by the elements  $ \textbf{ln}_k(a) $, $ k \in \mathbb{N} $. 

(iv)  The subalgebra  $ [\left\langle  a \right\rangle , \left\langle  a \right\rangle ] $  is generated as a subspace by the elements  $ \textbf{ln}_k(a) $ where  $ k \geq 2 $. 

(v) $ [\left\langle  a \right\rangle , \left\langle  a \right\rangle ] = \textbf{Leib}(\left\langle  a \right\rangle ) $. 

(vi)  The subalgebra  $ \gamma_k(\left\langle  a \right\rangle ) $  is generated as a subspace by the elements  $ \textbf{ln}_t(a) $ where  $ t \geq k $. 

(vii)  $ [\left\langle  a \right\rangle , \left\langle  a \right\rangle ]  \leq \zeta^{left}(L) $,   $ [\left\langle  a \right\rangle , \left\langle  a \right\rangle ] $  is an abelian subalgebra.  Moreover,  $ [[x, y], z] = 0 $  for all elements  $ x, y \in \left\langle  a \right\rangle  $  and an arbitrary element  $ z \in L $.   

\end{proposition}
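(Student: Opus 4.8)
The plan is to build everything on a single strengthened version of (i): I would prove by induction on $k$ that $\textbf{ln}_k(a) \in \zeta^{left}(L)$ for every $k \geq 2$, i.e. that $[\textbf{ln}_k(a), x] = 0$ for all $x \in L$. The base case $k = 2$ is exactly the quoted property of the Leibniz kernel, since $\textbf{ln}_2(a) = [a,a]$ gives $[[a,a], x] = 0$. For the inductive step I would write $\textbf{ln}_{k+1}(a) = [a, \textbf{ln}_k(a)]$ and apply the left Leibniz identity in the form $[[a,b],c] = [a,[b,c]] - [b,[a,c]]$ with $b = \textbf{ln}_k(a)$ and $c = x$, obtaining $[\textbf{ln}_{k+1}(a), x] = [a, [\textbf{ln}_k(a),x]] - [\textbf{ln}_k(a), [a,x]]$. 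By the inductive hypothesis $\textbf{ln}_k(a)$ annihilates everything on the left, so both terms on the right vanish. This yields the strengthened claim, and (i) follows at once by taking the second argument to be $\textbf{ln}_j(a)$, or $a$.

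With (i) in hand, (ii) is a routine induction on the number $k$ of factors. Any bracketing of $k$ copies of $a$ has the shape $[M, N]$, where $M$ and $N$ are bracketings of $i$ and $k - i$ copies with $1 \leq i \leq k - 1$; by induction each of $M, N$ is either $0$ or an $\textbf{ln}$ of the appropriate weight. If the product is non-zero, both factors are non-zero, so $M = \textbf{ln}_i(a)$ and $N = \textbf{ln}_{k-i}(a)$; but if $i \geq 2$ then $M \in \zeta^{left}(L)$ by (i) forces $[M,N] = 0$, a contradiction. Hence $i = 1$, $M = a$, and $[a, \textbf{ln}_{k-1}(a)] = \textbf{ln}_k(a)$. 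Part (iii) is then immediate: the subalgebra $\langle a \rangle$ is spanned by all bracketed products of copies of $a$, and by (ii) these are exhausted, up to zero, by the $\textbf{ln}_k(a)$.

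The remaining parts are now mechanical. For (iv) I would use (iii) to reduce $[\langle a\rangle,\langle a\rangle]$ to the span of the $[\textbf{ln}_i(a),\textbf{ln}_j(a)]$; by (i) these vanish unless $i = 1$, in which case $[a,\textbf{ln}_j(a)] = \textbf{ln}_{j+1}(a)$ has weight $\geq 2$, and conversely every $\textbf{ln}_k(a)$ with $k \geq 2$ lies in $[\langle a\rangle,\langle a\rangle]$. Part (vi) follows by the same computation, used as the inductive step of an induction on $k$ for $\gamma_k(\langle a\rangle) = [\langle a\rangle,\gamma_{k-1}(\langle a\rangle)]$, with (iii) and (iv) supplying the cases $k = 1, 2$. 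For (vii) I would note that, by (iv) together with the strengthened (i), $[\langle a\rangle,\langle a\rangle]$ is spanned by elements of $\zeta^{left}(L)$, hence is contained in $\zeta^{left}(L)$; being contained in the left center it is abelian, and for $x, y \in \langle a\rangle$ and any $z \in L$ the element $[x,y]$ lies in $\zeta^{left}(L)$, so $[[x,y],z] = 0$.

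The one step needing a genuinely different idea, which I expect to be the main obstacle, is (v). The inclusion $\textbf{Leib}(\langle a\rangle) \leq [\langle a\rangle,\langle a\rangle]$ is clear. For the reverse inclusion I would pass to the quotient $\langle a\rangle/\textbf{Leib}(\langle a\rangle)$, which is a Lie algebra by the defining property of the Leibniz kernel, generated by the single image of $a$; a one-generated Lie algebra is abelian, so its derived subalgebra is trivial, which is exactly the assertion $[\langle a\rangle,\langle a\rangle] \leq \textbf{Leib}(\langle a\rangle)$. Combining the two inclusions gives (v). The whole argument thus rests on the initial induction establishing that the higher left normed commutators fall into the left center; once that is secured, the statements (ii)--(vii) are essentially bookkeeping.
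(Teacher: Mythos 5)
Your proposal is correct. Note that the paper itself gives no proof of this Proposition; it simply defers to the reference \cite{CKSu2017}, so there is no in-text argument to compare against. Your argument is self-contained and sound: the strengthened form of (i), namely that $\textbf{ln}_k(a) \in \zeta^{left}(L)$ for all $k \geq 2$ (proved by induction from the kernel identity $[[a,a],x]=0$ and the left Leibniz identity), is exactly the content that reappears as part (vii), and it is the standard engine from which (ii)--(iv) and (vi) follow by the bookkeeping you describe. Your treatment of (v) via the quotient $\left\langle a\right\rangle/\textbf{Leib}(\left\langle a\right\rangle)$ being a one-generated, hence abelian, Lie algebra is also correct and is consistent with how the paper uses the Leibniz kernel elsewhere (e.g.\ in Lemma~6). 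The only stylistic remark is that you prove (vii) last although it is logically the first thing you establish; reordering would make the dependence clearer, but nothing is circular.
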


We consider first  the  case  when a Leibniz algebra  $ L $  includes a maximal as a subalgebra ideal  $ K $ and this ideal $ K $  is cyclic as a Leibniz algebra. 

We will need the following simple result.

\begin{lemma}\label{lem 1}
	Let  $ L $  be a Leibniz algebra over a field  $ F $. If  $ L $  does not include proper non-zero subalgebras, then  $ \dim_F(L) = 1 $. In particular, $ L $  is a cyclic Lie algebra. 
\end{lemma}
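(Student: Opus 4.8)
The plan is to reduce quickly to a cyclic algebra and then exploit the structure of its left normed commutators. First, assuming $L \neq \left\langle 0\right\rangle$, I would choose any non-zero element $a \in L$. The cyclic subalgebra $\left\langle a\right\rangle$ is then a non-zero subalgebra, so the hypothesis that $L$ has no proper non-zero subalgebra forces $\left\langle a\right\rangle = L$; in other words, $L$ is cyclic, generated by $a$.

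Next I would single out the element $b = \textbf{ln}_2(a) = [a, a]$ and show that it generates a very small subalgebra. By part~(i) of the Proposition, $[\textbf{ln}_2(a), \textbf{ln}_2(a)] = 0$, so $[b, b] = 0$ and hence $\textbf{ln}_k(b) = 0$ for all $k \geq 2$; by part~(iii) the cyclic subalgebra $\left\langle b\right\rangle$ is spanned by the elements $\textbf{ln}_k(b)$, $k \in \mathbb{N}$, so $\left\langle b\right\rangle = Fb$. The crux of the argument is the claim that $b = 0$. Indeed, if $b \neq 0$, then $Fb$ is a non-zero subalgebra, so by hypothesis $Fb = L$; then $\dim_F(L) = 1$ and $a = \lambda b$ for some $\lambda \in F$, whence $b = [a, a] = \lambda^2 [b, b] = 0$, a contradiction. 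I expect this short computation to be the only real obstacle, and it is mild.

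Once $b = [a, a] = 0$ is established, an easy induction on $k$ gives $\textbf{ln}_k(a) = 0$ for every $k \geq 2$, since $\textbf{ln}_{k + 1}(a) = [a, \textbf{ln}_k(a)]$. By part~(iii) of the Proposition the whole of $L = \left\langle a\right\rangle$ is then spanned by $\textbf{ln}_1(a) = a$, so $L = Fa$ and $\dim_F(L) = 1$. Finally, since $[a, a] = 0$, every element $x = \mu a$ satisfies $[x, x] = \mu^2 [a, a] = 0$, so $L$ obeys the identity $[x, x] = 0$ and is therefore a cyclic Lie algebra, completing the proof.
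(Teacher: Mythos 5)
Your proof is correct, and it reaches the conclusion by a genuinely different route than the paper's. The paper argues by a case split: if $L$ is a Lie algebra the claim is immediate, since any non-zero $a$ satisfies $[a,a]=0$ and so spans a one-dimensional subalgebra $Fa$, which must be all of $L$; if $L$ is not a Lie algebra, then $\textbf{Leib}(L)$ is a non-zero subalgebra, hence equals $L$, and since $\textbf{Leib}(L)\leq\zeta^{left}(L)$ this forces $L$ to be abelian and therefore again one-dimensional. You avoid the case split entirely: you fix a generator $a$ (so $L=\left\langle a\right\rangle$), use the Proposition to see that $\left\langle [a,a]\right\rangle =F[a,a]$ is a subalgebra, and rule out $[a,a]\neq 0$ by the minimality hypothesis together with the short computation $b=[a,a]=\lambda^{2}[b,b]=0$; then $L=Fa$ and the identity $[x,x]=0$ follows. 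Both arguments ultimately rest on the same fact --- that squares $[a,a]$ generate very small (left-central, abelian) subalgebras --- but the paper packages it through the Leibniz kernel while you package it through the cyclic-subalgebra machinery of the Proposition. The paper's version is marginally shorter; yours is more explicit about why the dimension collapses to $1$ and delivers the ``cyclic Lie algebra'' conclusion as a by-product rather than as a separate remark. (Both proofs, like the statement itself, tacitly assume $L\neq\left\langle 0\right\rangle$, which you at least flag explicitly.)
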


\begin{proof}
	Indeed, if  $ L $  is a Lie algebra, the result is trivial. Suppose that  $ L $  is not a Lie algebra, then  $ \textbf{Leib}(L) = K $  is a non-zero subalgebra. Then  $ L = K $  is abelian, and $ \dim_F(L) = 1 $. 
\end{proof}

Let  $ L $  be a Leibniz algebra. A linear transformation  $ f $  of  $ L $  is called a \textit{derivation}, if  $ f([a, b]) = [f(a), b] + [a, f(b)]  $ for all  $ a, b \in L $. 

Denote by  $ \textbf{End}_F(L)  $ the set of all linear transformations of  $ L $. Then  $ \textbf{End}_F(L) $  is an associative algebra by the addition and multiplication of the transformations. As usual, $ \textbf{End}_F(L) $  is a Lie algebra by the operations  $ + $  and  $ [,] $ where  $ [f, g] =  f \circ g - g \circ f  $ for all  $ f, g \in \textbf{End}_F(L) $. It is possible to show that  $ \textbf{Der}(L) $  is a subalgebra of the Lie algebra  $ \textbf{End}_F(L) $.

Consider the mapping  $ \textbf{l}_a: L \longmapsto L $, defined by the rule  $ \textbf{l}_a(x) = [a, x] $, $ x \in L $. The mapping  $ \textbf{l}_a $  is a derivation of  $ L $  such that  $ \beta \textbf{l}_a = \textbf{l}_{\beta a}$, $\textbf{l}_a + \textbf{l}_b = \textbf{l}_{a + b} $  and  $ [\textbf{l}_a, \textbf{l}_b] = \textbf{l}_{[a, b]} $  for all elements  $ a, b \in L $, $ \beta \in F $. These equality show that the set  $ \left\lbrace  \textbf{l}_a \, | \, a \in L \right\rbrace  $  is a subalgebra of  $ \textbf{Der}(L) $.

\begin{lemma}
	Let  $ L $  be a Leibniz algebra over a field  $ F $,  and  $ f $  be a derivation of  $ L $. Then   $ f(\zeta^{left}(L) \leq  \zeta^{left}(L) $,  $ f(\zeta^{right}(L)) \leq \zeta^{right}(L) $  and  $ f(\zeta(L)) \leq \zeta(L) $.
\end{lemma}

\begin{proof}
	Let  $ x $  be an arbitrary element of  $ L $  and let  $ z \in \zeta^{left}(L) $. Then  $ [z, x] = 0 $. Since a derivation is a linear mapping, $ f([z, x]) = 0 $. On the other hand,  
	$$
	0 = f([z, x]) = [f(z), x] + [z, f(x)] = [f(z), x], 
	$$
	so that  $ f(z) \in \zeta^{left}(L) $.
	
	Let  $ z \in \zeta^{right}(L) $. Then  $ [x, z] = 0 $. Now we have 
	$$
	0 = f(0) = f([x, z]) = [f(x), z] + [x, f(z)] = [x, f(z)],
	$$
	so that  $ f(z) \in \zeta^{right}(L) $. The above proved inclusions imply that $ f(\zeta(L)) \leq \zeta(L) $.
\end{proof}

Define the upper central series 
$$
\left\langle 0\right\rangle  = \zeta_0(L) \leq \zeta_1(L) \leq \zeta_2(L) \leq \ldots \leq \zeta_{\alpha}(L) \leq \zeta_{\alpha + 1}(L) \leq \ldots \zeta_{\gamma}(L) = \zeta_{\infty}(L)
$$
of a Leibniz algebra  $ L $  by the following rule:  $ \zeta_1(L) = \zeta(L) $  is the center of  $ L $, recursively  $ \zeta_{\alpha + 1}(L)/\zeta_{\alpha}(L) = \zeta(L/\zeta_{\alpha}(L)) $  for all ordinals  $ \alpha $,  and  $ \zeta_{\lambda}(L) =  \bigcup_{\mu < \lambda} \zeta_{\mu}(L) $  for the limit ordinals  $ \lambda $. By definition, each term of this series is an ideal of  $ L $. The last term  $ \zeta_{\infty}(L) $  of this series is called the \textit{upper hypercenter of} $ L $. If  $ L = \zeta_{\infty}(L) $  then  $ L $  is called a \textit{hypercentral}  Leibniz algebra.  

\begin{corollary}
	\label{cor 1}
	Let  $ L $  be a Leibniz algebra over a field  $ F $,  and  $ f $  be a derivation of  $ L $. Then   $ f(\zeta_{\alpha}(L)) \leq  \zeta_{\alpha}(L) $  for every ordinal  $ \alpha $.
\end{corollary}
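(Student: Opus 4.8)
The plan is to argue by transfinite induction on the ordinal $\alpha$, with the preceding Lemma supplying the crucial input at each successor stage. For $\alpha = 0$ the claim is vacuous, since $\zeta_0(L) = \langle 0 \rangle$, and for $\alpha = 1$ it is exactly the assertion $f(\zeta(L)) \leq \zeta(L)$ already established in the Lemma.

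For the successor step, suppose $f(\zeta_{\alpha}(L)) \leq \zeta_{\alpha}(L)$. The decisive observation is that, since $f$ maps the ideal $\zeta_{\alpha}(L)$ into itself, it descends to a well-defined linear transformation $\bar{f}$ of the quotient Leibniz algebra $\bar{L} = L/\zeta_{\alpha}(L)$ via $\bar{f}(x + \zeta_{\alpha}(L)) = f(x) + \zeta_{\alpha}(L)$. A direct check on cosets shows that $\bar{f}$ inherits the derivation identity from $f$, so $\bar{f}$ is a derivation of $\bar{L}$. Applying the Lemma to $\bar{L}$ and $\bar{f}$ yields $\bar{f}(\zeta(\bar{L})) \leq \zeta(\bar{L})$. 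By the defining property of the upper central series, $\zeta(\bar{L}) = \zeta(L/\zeta_{\alpha}(L)) = \zeta_{\alpha + 1}(L)/\zeta_{\alpha}(L)$, and translating this inclusion back to $L$ gives precisely $f(\zeta_{\alpha + 1}(L)) \leq \zeta_{\alpha + 1}(L)$.

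For a limit ordinal $\lambda$ I would use that $\zeta_{\lambda}(L) = \bigcup_{\mu < \lambda} \zeta_{\mu}(L)$: any element of $\zeta_{\lambda}(L)$ already lies in some $\zeta_{\mu}(L)$ with $\mu < \lambda$, whence its image under $f$ lies in $\zeta_{\mu}(L) \leq \zeta_{\lambda}(L)$ by the inductive hypothesis. This completes the induction.

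The only step calling for genuine verification is the successor case, where two things must be confirmed: that $\bar{f}$ is well defined and that it is again a derivation. The first is exactly the content of the inductive hypothesis $f(\zeta_{\alpha}(L)) \leq \zeta_{\alpha}(L)$, which guarantees that $f$ respects cosets modulo $\zeta_{\alpha}(L)$; the second follows mechanically by reducing the identity $f([a, b]) = [f(a), b] + [a, f(b)]$ modulo the ideal $\zeta_{\alpha}(L)$. Everything else is bookkeeping over the transfinite index, so I expect no serious obstacle.
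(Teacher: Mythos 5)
Your proof is correct; the paper in fact states this corollary without any proof, and your transfinite induction — passing to the induced derivation $\bar{f}$ on $L/\zeta_{\alpha}(L)$ at successor stages and using the union description at limit stages — is exactly the standard argument the authors evidently had in mind, with the preceding Lemma supplying the step $\bar{f}(\zeta(\bar{L})) \leq \zeta(\bar{L})$. No gaps.
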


\begin{lemma}\label{lem 3}
	Let  $ L $  be a cyclic nilpotent Leibniz algebra over a field  $ F $, $ L = Fa_1 + \ldots + Fa_n $  where  $ [a_1, a_1] = a_2 $, $ [a_1, a_{j - 1}] = a_j $, $ 3 \leq j \leq n $, $ [a_1, a_n] = 0 $, $ [a_m, a_k] = 0 $  for all $ m > 1 $, $ 1\leq k \leq n $. If the linear mapping  $ f $  is a derivation of $  L $ then  
	\begin{align*}
		 f(a_1) &=& \gamma_1 a_1 + \gamma_2 a_2 + \gamma_3 a_3 + \ldots + \gamma_{n - 1} a_{n - 1} + \gamma_n a_n ,\\
		 f(a_2) &=&          2\gamma_1 a_2 + \gamma_2 a_3  + \ldots + \gamma_{n - 2}a_{n - 1} + \gamma_{n - 1}a_n ,\\
		 f(a_3) &=&                     3\gamma_1 a_3 + \ldots + \gamma_{n - 3}a_{n - 1}  + \gamma_{n - 2}a_n ,\\
		  &\ldots \\
		 f(a_{n - 1}) &=&                                  (n - 1)\gamma_{1}a_{n - 1}  + \gamma_2 a_n ,\\
		 f(a_n) &=&                                                           n\gamma_1 a_n .
	\end{align*}
\end{lemma}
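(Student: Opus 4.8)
The plan is to exploit the fact that the entire basis is manufactured from the single generator $a_1$ by repeated left multiplication. Indeed, the defining relations say exactly that $[a_1,a_k]=a_{k+1}$ for $1\le k\le n-1$, that $[a_1,a_n]=0$, and that $[a_m,a_k]=0$ whenever $m>1$; equivalently $a_k=\textbf{ln}_k(a_1)$. Since $\{a_1,\dots,a_n\}$ is a basis, I would begin by writing the image of the generator in the a priori arbitrary form $f(a_1)=\gamma_1 a_1+\gamma_2 a_2+\cdots+\gamma_n a_n$, and then show that the derivation identity forces each remaining $f(a_k)$ into the stated shape. The engine of the whole argument is the relation $f(a_{k+1})=f([a_1,a_k])=[f(a_1),a_k]+[a_1,f(a_k)]$, which determines $f(a_{k+1})$ completely once $f(a_k)$ is known; so the values are propagated from $f(a_1)$ by induction on $k$.

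First I would treat the base step $f(a_2)=f([a_1,a_1])=[f(a_1),a_1]+[a_1,f(a_1)]$. Expanding $f(a_1)$ and using $[a_m,a_1]=0$ for $m>1$ kills every term of $[f(a_1),a_1]$ except $\gamma_1[a_1,a_1]=\gamma_1 a_2$; expanding $[a_1,f(a_1)]$ and using $[a_1,a_k]=a_{k+1}$ together with $[a_1,a_n]=0$ shifts every index up by one, producing $\gamma_1 a_2+\gamma_2 a_3+\cdots+\gamma_{n-1}a_n$. Adding the two gives $f(a_2)=2\gamma_1 a_2+\gamma_2 a_3+\cdots+\gamma_{n-1}a_n$, precisely the claimed formula.

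For the inductive step I would assume $f(a_k)=k\gamma_1 a_k+\gamma_2 a_{k+1}+\cdots+\gamma_{n-k+1}a_n$ and apply the same two observations to $f(a_{k+1})=[f(a_1),a_k]+[a_1,f(a_k)]$. In the first bracket only the $a_1$-component of $f(a_1)$ survives, because $[a_m,a_k]=0$ for $m>1$, contributing $\gamma_1 a_{k+1}$. In the second bracket left multiplication by $a_1$ again raises every index by one and annihilates the term ending in $a_n$, turning $f(a_k)$ into $k\gamma_1 a_{k+1}+\gamma_2 a_{k+2}+\cdots+\gamma_{n-k}a_n$. Summing yields $(k+1)\gamma_1 a_{k+1}+\gamma_2 a_{k+2}+\cdots+\gamma_{n-k}a_n$, which is exactly the formula at level $k+1$; in particular the diagonal coefficient grows by one copy of $\gamma_1$ at each step, so that $f(a_n)=n\gamma_1 a_n$.

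The computation is entirely routine; the only points requiring care are the bookkeeping of the off-diagonal $\gamma_i$-coefficients under the index shift, and the recognition that the relations $[a_1,a_n]=0$ and $[a_m,a_k]=0$ (for $m>1$) are precisely what truncate the shifted sums correctly. I would close by noting that these same relations guarantee no inconsistency arises at the top, so that conversely every choice of $\gamma_1,\dots,\gamma_n$ does extend to a genuine derivation, and that by linearity $f$ is thereby determined on all of $L$; hence the displayed formulas describe every derivation of $L$.
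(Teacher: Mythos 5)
Your proof is correct and follows essentially the same route as the paper: both arguments hinge on the recursion $f(a_{k+1}) = f([a_1,a_k]) = [f(a_1),a_k] + [a_1,f(a_k)]$, which, combined with the relations $[a_m,a_k]=0$ for $m>1$ and $[a_1,a_n]=0$, propagates the arbitrary expansion of $f(a_1)$ through the whole basis. The only difference is that the paper first invokes the invariance of the upper central series under derivations (its Corollary~1) to obtain an a priori triangular form for each $f(a_k)$ before solving for the coefficients, a preparatory step your direct induction renders unnecessary; also note that the converse assertion at the end of your write-up (that every choice of $\gamma_1,\dots,\gamma_n$ yields a derivation), while true, is not part of the lemma and would require a separate check on the pairs $(a_m,a_k)$ with $m>1$.
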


\begin{proof}
	Put  $ Z_1 = Fa_n, Z_2 = Fa_n + Fa_{n - 1}, \ldots , Z_{n - 1} = Fa_n + \ldots + Fa_2, Z_n = L$, then 
	$$
	\left\langle 0\right\rangle  = Z_0 \leq Z_1 \leq Z_2 \leq  \ldots \leq Z_{n - 1} \leq Z_n = L
	$$
	is the upper central series of  $ L $. By Corollary~\ref{cor 1},  $ f(Z_j) \leq Z_j $, $ 1 \leq j \leq n $. It follows that 
	\begin{align*}
		f(a_1) &=& \gamma_{1\,1}a_1 + \gamma_{2\,1}a_2 + \gamma_{3\,1}a_3 + \ldots + \gamma_{n - 1\,1} a_{n - 1}   +  \gamma_{n\,1}a_n, \\
		f(a_2) &=&             \gamma_{2\,2}a_2 + \gamma_{3\,2}a_3  + \ldots + \gamma_{n - 1\, 2}a_{n - 1}   + \gamma_{n\,2}a_n, \\
		&\ldots&\\
		f(a_{n - 1}) &=&                                  \gamma_{n - 1 \, n - 1} a_{n - 1}    +  \gamma_{n \, n - 1} a_n,\\
		f(a_n) &=&                                               \gamma_{n \, n} a_n.  
	\end{align*}
	We have  
	$$
	\sum_{2 \leq j \leq n}  \gamma_{j \, 2} a_j = f(a_2) = f([a_1, a_1]) = [f(a_1), a_1] + [a_1, f(a_1)] =
	$$
	$$
	\left[ \sum_{1 \leq j \leq n} \gamma_{j \, 1} a_j, a_1\right]  + \left[ a_1, \sum_{1 \leq j \leq n} \gamma_{j \, 1} a_j \right]  = \gamma_{1 \, 1} a_2  + \sum_{1 \leq j \leq n-1}   \gamma_{j \, 1} a_{j + 1}.
	$$
	So, we obtain that
	$$
	\gamma_{2 \, 2} = 2\gamma_{1 \, 1}, \gamma_{3 \, 2}  = \gamma_{2 \, 1}, \gamma_{4 \, 2}  = \gamma_{3 \, 1}, \ldots , \gamma_{n - 1 \, 2}  = \gamma_{n - 2 \, 1}, \gamma_{n \, 2} = \gamma_{n - 1 \, 1}. 
	$$
	Further,
	$$
	\sum_{3\leq j\leq n}  \gamma_{j \, 3} a_j = f(a_3) = f([a_1, a_2]) = [f(a_1), a_2] + [a_1, f(a_2)] =
	$$
	$$
	\left[ \sum_{1\leq j \leq n} \gamma_{j \, 1} a_j, a_2 \right]  + \left[ a_1, \sum_{2\leq j\leq n} \gamma_{j \, 2} a_j \right]  = \gamma_{1 \, 1} a_3  + \sum_{2\leq j\leq n - 1} \gamma_{j \, 2} a_{j + 1}. 
	$$
	So, we obtain that 
	
	$\gamma_{3\, 3} = \gamma_{1 \, 1} + \gamma_{2 \, 2} = 3\gamma_{1 \, 1},$
	
	$\gamma_{4 \, 3} = \gamma_{3 \, 2} = \gamma_{2 \, 1}, $
	
	$\gamma_{5 \, 3} = \gamma_{4 \, 2} = \gamma_{3 \, 1}, $
	
	$\ldots $
	
	$ \gamma_{n - 1 \, 3}  = \gamma_{n - 2 \, 2} = \gamma_{n - 3 \, 1}, $
	
	$ \gamma_{n \, 3} = \gamma_{n-1 \, 2} =\gamma_{n - 2 \, 1}$.
	
	If  $ k > 2 $, then we have
	$$
	\sum_{ k \leq j \leq n}  \gamma_{j \, k} a_j = f(a_k) = f([a_1, a_{k - 1}]) = [f(a_1), a_{k - 1}] + [a_1, f(a_{k - 1})] =
	$$
	$$
	\left[ \sum_{1 \leq j \leq n} \gamma_{j \, 1} a_j, a_{k - 1}\right]  + \left[ a_1, \sum_{k-1 \leq j \leq n} \gamma_{j \, k-1} a_j \right]  = \gamma_{1 \, 1} a_k  + \sum_{k-1 \leq j \leq n-1}  \gamma_{j \, k - 1} a_{j + 1}, 
	$$
	and we obtain that

	$\gamma_{k \, k}  = \gamma_{1 \, 1} + \gamma_{k - 1 \, k - 1} = k\gamma_{1 \, 1},  $
	
	$\gamma_{k + 1 \, k}  = \gamma_{k \, k - 1} = \ldots = \gamma_{2 \, 1}, $
	
	$\gamma_{k + 2 \, k}  = \gamma_{k + 1 \, k - 1} = \ldots = \gamma_{3 \, 1}, $
	
	$\ldots $
	
	$\gamma_{n - 1 \, k}  = \gamma_{n - 2 \, k - 1} = \ldots = \gamma_{n - k \, 1},  $
	
	$\gamma_{n \, k}  = \gamma_{n-1 \, k-1}  = \dots = \gamma_{n - k + 1 \, 1} $. 
	
\end{proof}

Consider the mapping  $ \textbf{r}_a: L \longmapsto L $, defined by the rule  $ \textbf{r}_a(x) = [x, a] $, $ x \in L $. For every  $ x, y \in L $  and  $ \alpha \in F  $ we have $ \textbf{r}_a(x + y) = \textbf{r}_a(x) + \textbf{r}_a(y) $, $ \textbf{r}_a(\alpha x) = \alpha \textbf{r}_a(x) $ and 
$$
\textbf{r}_a([x, y]) = [[x, y], a] = [x, [y, a]] - [y, [x, a]] = [x, \textbf{r}_a(y)] - [y, \textbf{r}_a(x)]. 
$$
Also we have $ \beta \textbf{r}_a = \textbf{r}_{\beta a}$, and  $ \textbf{r}_a + \textbf{r}_b = \textbf{r}_{a + b} $  for all  $ a, b \in L $  and  $ \beta \in F $. 

In this connection, a linear mapping  $ g: L \longmapsto L $  is called the \textit{right derivation} if it satisfies the following condition  $ g([x, y]) = [x, g(y)] - [y, g(x)] $.

\begin{lemma}\label{lem 4}
	Let  $ L $  be a Leibniz algebra over a field  $ F $  and  $ g $  be a right derivation of  $ L $. Then   $ g(\zeta^{left}(L)) \leq  \zeta^{right}(L) $, in particular, $ g(\zeta(L)) \leq \zeta^{right}(L) $  and $ g(\textbf{Leib}(L)) = \left\langle 0\right\rangle  $.
\end{lemma}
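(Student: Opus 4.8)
The plan is to exploit the defining identity of a right derivation, namely $g([x,y]) = [x,g(y)] - [y,g(x)]$, together with the characterizations of the left and right centers, reducing each of the three assertions to a single substitution.

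First I would take an arbitrary $z \in \zeta^{left}(L)$ and an arbitrary $y \in L$. Since $z$ lies in the left center, $[z,y]=0$, so applying $g$ gives $g([z,y]) = g(0) = 0$. On the other hand, the right derivation identity yields $g([z,y]) = [z,g(y)] - [y,g(z)]$. Because $z \in \zeta^{left}(L)$, the first term $[z,g(y)]$ vanishes, leaving $[y,g(z)] = 0$. As $y$ was arbitrary, this shows $g(z) \in \zeta^{right}(L)$, which is precisely $g(\zeta^{left}(L)) \le \zeta^{right}(L)$. The inclusion for the full center is then immediate, since $\zeta(L) \le \zeta^{left}(L)$ gives $g(\zeta(L)) \le g(\zeta^{left}(L)) \le \zeta^{right}(L)$.

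For the Leibniz kernel I would compute directly on a generator $[a,a]$. Substituting $x = y = a$ into the right derivation identity gives $g([a,a]) = [a,g(a)] - [a,g(a)] = 0$. Since $\textbf{Leib}(L)$ is, by definition, the subspace spanned by the elements $[a,a]$ with $a \in L$, and $g$ is linear, it follows that $g$ annihilates all of $\textbf{Leib}(L)$, i.e. $g(\textbf{Leib}(L)) = \langle 0\rangle$.

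I do not expect a genuine obstacle: each claim collapses to one application of the right derivation identity, with the superfluous term killed either by membership in the left center or by the symmetry $x = y = a$. The only point requiring mild care is to invoke that $\textbf{Leib}(L)$ is \emph{spanned} by the squares $[a,a]$ (not merely that it contains them), so that linearity of $g$ upgrades the pointwise vanishing to vanishing on the entire ideal.
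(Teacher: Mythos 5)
Your proof is correct and follows essentially the same route as the paper's: apply $g$ to $[z,y]=0$, kill the term $[z,g(y)]$ using $z\in\zeta^{left}(L)$ to conclude $[y,g(z)]=0$, and for the Leibniz kernel note that $g([a,a])=[a,g(a)]-[a,g(a)]=0$ and extend by linearity over the spanning set. No gaps; the extra remark that $\textbf{Leib}(L)$ is spanned by the squares is the right point of care and is implicit in the paper's argument as well.
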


\begin{proof}
	Let  $ x $  be an arbitrary element of  $ L $  and let  $ z \in \zeta^{left}(L) $. Then  $ [z, x] = 0 $. Since a right derivation is a linear mapping, $ g([z, x]) = 0 $. On the other hand,  
$$	
	0 = g([z, x]) = [z, g(x)] - [x, g(z)] = - [x, g(z)], 
$$	
	so that  $ g(z) \in \zeta^{right}(L) $.
	
	Let  $ x $  be an arbitrary element of  $ L $, we have  g$ ([x, x]) = [x, g(x)] - [x, g(x)] = 0 $. It follows that $  g(\textbf{Leib}(L)) = \left\langle 0\right\rangle  $.
	
\end{proof}

\begin{lemma}
	Let  $ L $  be a cyclic nilpotent Leibniz algebra over a field  $ F $, $ L = Fa_1 + \ldots + Fa_n $  where  $ [a_1, a_1] = a_2 $, $ [a_1, a_{j - 1}] = a_j $, $ 3 \leq j \leq n $, $ [a_1, a_n] = 0 $, $ [a_m, a_k] = 0 $  for all $ m > 1 $, $ 1 \leq k \leq n $. If a linear mapping  $ g $  is a right derivation of  $ L $, then  $ g(a_1) = \rho_1a_1 + \rho_2a_2 + \rho_3a_3 + \ldots + \rho_{n - 1} a_{n - 1}   + \rho_n a_n $, $ 	g(a_2) = g(a_3) = \ldots = g(a_n) = 0 $.      
	
\end{lemma}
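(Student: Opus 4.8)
The plan is to exploit the fact that a right derivation automatically annihilates the Leibniz kernel, which in this algebra contains everything except the top generator $a_1$. First I would identify $\textbf{Leib}(L)$ explicitly. Since $L$ is cyclic, generated by $a_1$, the left normed commutators of $a_1$ are precisely $\textbf{ln}_k(a_1) = a_k$ for $1 \le k \le n$: indeed $\textbf{ln}_2(a_1) = [a_1,a_1] = a_2$ and $\textbf{ln}_{k+1}(a_1) = [a_1,\textbf{ln}_k(a_1)] = [a_1, a_k] = a_{k+1}$. Hence, by parts (iv) and (v) of the Proposition, $\textbf{Leib}(L) = [L,L]$ is spanned as a subspace by the $\textbf{ln}_k(a_1)$ with $k \ge 2$; that is, $\textbf{Leib}(L) = Fa_2 + Fa_3 + \cdots + Fa_n$.

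With this identification the lemma is almost immediate. By Lemma~\ref{lem 4}, a right derivation satisfies $g(\textbf{Leib}(L)) = \left\langle 0\right\rangle$, so $g(a_2) = g(a_3) = \cdots = g(a_n) = 0$. Since $g$ is a linear transformation of $L$ and $\{a_1,\ldots,a_n\}$ is a basis, $g(a_1)$ is some element of $L$, which I may write as $g(a_1) = \rho_1 a_1 + \rho_2 a_2 + \cdots + \rho_n a_n$ with $\rho_i \in F$. This is exactly the asserted conclusion.

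As an independent check avoiding Lemma~\ref{lem 4}, I would verify the vanishing directly from the right derivation identity $g([x,y]) = [x,g(y)] - [y,g(x)]$ by induction. Setting $x = y = a_1$ gives $g(a_2) = g([a_1,a_1]) = [a_1,g(a_1)] - [a_1,g(a_1)] = 0$; and for $k \ge 2$, assuming $g(a_k) = 0$, one has $g(a_{k+1}) = g([a_1,a_k]) = [a_1,g(a_k)] - [a_k,g(a_1)] = -[a_k,g(a_1)]$, which is zero because $[a_m,a_\ell] = 0$ for all $m > 1$. The only point requiring any care is the bookkeeping of which products vanish, so I do not anticipate a genuine obstacle; the substance of the lemma lies entirely in the structural fact that $a_2,\ldots,a_n$ all lie in the Leibniz kernel.
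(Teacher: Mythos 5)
Your proof is correct and takes essentially the same route as the paper: the paper's own argument consists precisely of invoking Lemma~\ref{lem 4} together with the identification $\textbf{Leib}(L) = Fa_2 + \cdots + Fa_n$, and the form of $g(a_1)$ is just its expression in the given basis. Your supplementary direct induction from the right derivation identity is a correct but redundant check.
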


\begin{proof}
	Indeed, by Lemma~\ref{lem 4} $ g(\textbf{Leib}(L)) = \left\langle 0\right\rangle  $. Since  $ \textbf{Leib}(L) = Fa_n + \ldots + Fa_2 $, we obtain that  $ g(a_2) = \ldots = g(a_n) = 0 $. 
\end{proof}

Let  $ L $  be a nilpotent Leibniz algebra of finite dimension, and  $ K $  be a maximal subalgebra of  $ L $. Then  $ K $  is an ideal of  $ L $~\cite[Theorem D]{KSuSe2018}. Suppose that  $ K $  is cyclic. Then  $ K = Fa_1 \oplus \ldots \oplus Fa_n $  where   
$$
[a_1, a_1] = a_2, [a_1, a_2] = a_3, \ldots , [a_1, a_{n - 1} ] = a_n, [a_1, a_n] = 0.
$$

Suppose first that  $ K $  is abelian. It is only possible if  $ \dim_F(K) = 1 $. In this case,  $ \dim_F(L) = 2 $. There are two non-isomorphic cases 
$$
L_1 = Fa + Fb, [a, a] =b, [b, a] = [a, b] = [b, b] = 0,
$$
and 
$$
L_2 = Fc + Fd, [c, c] = [c, d] = d,[d, c] = [d, d] = 0.
$$
(see, for example, the survey~\cite{KKPS2017}). In the first case,  $ L_1 $  is a nilpotent Leibniz algebra.

\begin{lemma} \label{lem 6}
	Let  $ L $  be a Leibniz algebra over a field  $ F $, and it of finite dimension. Suppose that  $ L $  includes an ideal  $ K $ of codimension  $ 1 $. If  $ K $  is a non-abelian and  $ \dim_F(K/\textbf{Leib}(K)) = 1 $, then  $ \textbf{Leib}(L) = \textbf{Leib}(K) $. 
\end{lemma}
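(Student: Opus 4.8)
The inclusion $\textbf{Leib}(K) \leq \textbf{Leib}(L)$ is immediate, since every generator $[a,a]$ with $a \in K$ is also a generator of $\textbf{Leib}(L)$; so the entire content of the statement is the reverse inclusion $\textbf{Leib}(L) \leq \textbf{Leib}(K)$. The plan is to locate $\textbf{Leib}(L)$ precisely inside the chain $\textbf{Leib}(K) \leq K \leq L$ and then to exploit the hypothesis $\dim_F(K/\textbf{Leib}(K)) = 1$ to cut everything down to a single dichotomy.

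First I would show that $\textbf{Leib}(L) \leq K$. Since $K$ has codimension $1$, the quotient $L/K$ is a one-dimensional Leibniz algebra, say $L/K = F\bar v$ with $[\bar v, \bar v] = \lambda \bar v$. The identity $[[\bar v, \bar v], \bar v] = 0$, valid in any Leibniz algebra, forces $\lambda^2 \bar v = 0$, whence $\lambda = 0$ and $L/K$ is abelian, in particular a Lie algebra. By the minimality property of the Leibniz kernel recalled above (if $L/H$ is a Lie algebra then $\textbf{Leib}(L) \leq H$), applied with $H = K$, I obtain $\textbf{Leib}(L) \leq K$.

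Now $\textbf{Leib}(K) \leq \textbf{Leib}(L) \leq K$, and because $\dim_F(K/\textbf{Leib}(K)) = 1$ the only subspaces of $K$ containing $\textbf{Leib}(K)$ are $\textbf{Leib}(K)$ and $K$ themselves; hence either $\textbf{Leib}(L) = \textbf{Leib}(K)$, which is the desired conclusion, or $\textbf{Leib}(L) = K$. To eliminate the second alternative I would invoke $\textbf{Leib}(L) \leq \zeta^{left}(L)$: if $\textbf{Leib}(L) = K$, then $K \leq \zeta^{left}(L)$, so $[k, y] = 0$ for every $k \in K$ and $y \in L$, and in particular $[K, K] = \left\langle 0 \right\rangle$, contradicting the hypothesis that $K$ is non-abelian. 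Therefore $\textbf{Leib}(L) = \textbf{Leib}(K)$.

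I do not anticipate a serious obstacle: the argument is essentially a two-step squeeze. The one place deserving care is the verification that $L/K$ is a Lie algebra, which is what places $\textbf{Leib}(L)$ inside $K$; from there the codimension-one hypothesis produces the dichotomy and the containment $\textbf{Leib}(L) \leq \zeta^{left}(L)$ disposes of the bad case. It is worth noting that the two hypotheses enter exactly once each — non-abelianness of $K$ in ruling out $\textbf{Leib}(L) = K$, and $\dim_F(K/\textbf{Leib}(K)) = 1$ in generating the dichotomy — which is a good consistency check that no assumption is superfluous.
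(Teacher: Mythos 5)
Your proof is correct. It differs from the paper's in how the ``bad case'' is isolated and killed. The paper starts from the dichotomy ``either $\textbf{Leib}(L)=\textbf{Leib}(K)$ or $L=K+\textbf{Leib}(L)$'' and disposes of the second alternative by passing to $B=K\cap\textbf{Leib}(L)$, observing that $L/B$ splits as a sum of two one-dimensional (hence abelian) ideals, so $L/B$ is a Lie algebra and therefore $\textbf{Leib}(L)\leq B$, a contradiction. You instead prove up front that $\textbf{Leib}(L)\leq K$, by noting that the one-dimensional quotient $L/K$ must be abelian (your computation $\lambda^{2}\bar v=0$ from $[[\bar v,\bar v],\bar v]=0$ is a clean way to see this), and only then invoke the codimension-one hypothesis to get the dichotomy $\textbf{Leib}(L)=\textbf{Leib}(K)$ or $\textbf{Leib}(L)=K$, the latter being excluded because $\textbf{Leib}(L)\leq\zeta^{left}(L)$ is abelian while $K$ is not. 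Both arguments ultimately rest on the same principle --- a Lie quotient forces containment of the Leibniz kernel --- but you apply it to $L/K$ rather than to $L/B$, which makes the case analysis shorter and renders the contradiction argument with the sum decomposition unnecessary. The paper's route has the minor virtue of not needing the preliminary observation that a one-dimensional Leibniz algebra is abelian, but yours is the more direct squeeze; your accounting of where each hypothesis is used is also accurate.
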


\begin{proof}
	Since  $ K $  is non-abelian, $ K \neq \textbf{Leib}(L) $. This fact together with the obvious inclusion  $ \textbf{Leib}(K) \leq \textbf{Leib}(L) $  implies that either  $ \textbf{Leib}(K) = \textbf{Leib}(L) $  or  $ L = K + \textbf{Leib}(L) $. Consider the last case. Put  $ A = \textbf{Leib}(L) $, $ B = K \cap A $. Since  $ \dim_F(K/\textbf{Leib}(K)) = 1 $  and  $ \dim_F(L/K) = 1 $, we obtain that  
	$ \dim_F(L/\textbf{Leib}(K)) = 2 $. Since  $  \textbf{Leib}(K) \neq A $, $ \dim_F(L/A) = 1 $. We have $ L/B = (K/B) \oplus (A/B) $. Since  $ K $  and  $ A $  are ideals of  $ K $, $ [K, A] \leq B $. The isomorphisms  
	$$
	K/B = K/(K \cap A) \cong (K + A)/A = L/A 
	$$ 
	and  
	$$A/B = A/(K \cap A) \cong (K + A)/K = L/K
	$$
	show that   $ \dim_F(K/B) = \dim_F(A/B) = 1 $. In particular, $ K/B $  and  $ A/B $  are abelian, and, therefore,  $ L/B $  is abelian. In particular, $ L/B $  is a Lie algebra. But in this case.  $ B $  must includes  $ \textbf{Leib}(L) = A $, and we come to a contradiction. This contradiction shows that  $ \textbf{Leib}(L) = \textbf{Leib}(K) $. 
\end{proof}

\begin{corollary}
	Let  $ L $  be a nilpotent Leibniz algebra over a field  $ F $ having finite dimension. Suppose that  $ L $  includes a maximal cyclic subalgebra  $ K $. If  K  is non-abelian, then   $ \textbf{Leib}(L) = \textbf{Leib}(K) $. 
\end{corollary}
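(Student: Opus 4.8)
The plan is to deduce the statement from Lemma~\ref{lem 6}. That lemma requires $K$ to be an ideal of codimension $1$ with $\dim_F(K/\textbf{Leib}(K)) = 1$, so after checking these three conditions for a maximal cyclic $K$ the conclusion will follow at once.

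First I would note, as already recorded above, that because $L$ is nilpotent of finite dimension and $K$ is a maximal subalgebra, $K$ is automatically an ideal of $L$ by~\cite[Theorem D]{KSuSe2018}. To get codimension $1$, I would pass to the quotient: since $K$ is an ideal, $L/K$ is a Leibniz algebra whose subalgebras correspond to the subalgebras of $L$ containing $K$; maximality of $K$ forces these to be just $K$ and $L$, so $L/K$ has no proper non-zero subalgebra. Lemma~\ref{lem 1} then gives $\dim_F(L/K) = 1$. For the last condition I would use that $K = \left\langle a_1\right\rangle$ is cyclic: by part (iii) of the Proposition it is spanned by the elements $\textbf{ln}_k(a_1)$, $k \in \mathbb{N}$, while by parts (iv) and (v) we have $\textbf{Leib}(K) = [\left\langle a_1\right\rangle, \left\langle a_1\right\rangle]$ spanned by those with $k \geq 2$. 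Hence $K/\textbf{Leib}(K)$ is spanned by the image of $a_1 = \textbf{ln}_1(a_1)$ alone, so $\dim_F(K/\textbf{Leib}(K)) \leq 1$; since $K$ is non-abelian we have $\textbf{Leib}(K) = Fa_2 \oplus \ldots \oplus Fa_n \neq K$ with $a_1 \notin \textbf{Leib}(K)$, which forces equality.

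Once these three facts are in place, Lemma~\ref{lem 6} applies verbatim (with $K$ non-abelian of codimension $1$ and $\dim_F(K/\textbf{Leib}(K)) = 1$) and returns $\textbf{Leib}(L) = \textbf{Leib}(K)$. The proof is essentially an assembly of earlier results, so I do not expect a genuinely hard step; the only part that calls for an actual argument is the codimension-$1$ claim, and even there the work is confined to the subalgebra correspondence for the ideal $K$ followed by an invocation of Lemma~\ref{lem 1}.
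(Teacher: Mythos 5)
Your proposal is correct and follows essentially the same route as the paper: establish that $K$ is an ideal via~\cite[Theorem D]{KSuSe2018}, get $\dim_F(L/K)=1$ from Lemma~\ref{lem 1}, note that cyclicity of $K$ gives $\dim_F(K/\textbf{Leib}(K))=1$, and then invoke Lemma~\ref{lem 6}. You merely spell out in more detail the steps the paper states in one line each.
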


\begin{proof}
	Being a maximal subalgebra of a nilpotent Leibniz algebra, $ K $  is an ideal of  $ L $~\cite[Theorem D]{KSuSe2018}. Since  $ K $  is non-abelian, $ K \neq \textbf{Leib}(L) $. The fact that  $ K $  is cyclic implies that $ \dim_F(K/\textbf{Leib}(K)) = 1 $. Now we can apply Lemma~\ref{lem 6}. 
\end{proof}

\begin{corollary}
	Let  $ L $  be a nilpotent Leibniz algebra over a field  $ F $, having finite dimension. Suppose that  $ L $  includes a maximal cyclic subalgebra  $ K $. If  $ K $  is non-abelian, then  $ \textbf{ncl}(L) = \textbf{ncl}(K) $. 
\end{corollary}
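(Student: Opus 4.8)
The plan is to prove the sharper statement that $\gamma_k(L) = \gamma_k(K)$ for every $k \geq 2$, from which equality of the nilpotency classes is immediate (note $\textbf{ncl}(K) \geq 2$, since $K$ is non-abelian). First I would fix the ambient structure: being a maximal subalgebra of a nilpotent algebra, $K$ is an ideal, and since $K$ is maximal the quotient $L/K$ contains no proper non-zero subalgebra, so Lemma~\ref{lem 1} forces $\dim_F(L/K) = 1$; thus $L = K \oplus Fd$ as a vector space for some $d \notin K$. Writing $K = Fa_1 \oplus \cdots \oplus Fa_n$ in its canonical cyclic form, Proposition~1 gives $\gamma_k(K) = Fa_k \oplus \cdots \oplus Fa_n$ for $2 \leq k \leq n$ and $\textbf{Leib}(K) = [K,K] = \gamma_2(K)$.

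For the base case $k = 2$ I would invoke the preceding Corollary, which yields $\textbf{Leib}(L) = \textbf{Leib}(K) = \gamma_2(K)$, so $\dim_F(L/\textbf{Leib}(L)) = (n+1) - (n-1) = 2$. As $L/\textbf{Leib}(L)$ is a nilpotent Lie algebra of dimension $2$, it is abelian, whence $\gamma_2(L) = [L,L] \subseteq \textbf{Leib}(L) = \gamma_2(K)$; the reverse inclusion $\gamma_2(K) \subseteq \gamma_2(L)$ is automatic, so $\gamma_2(L) = \gamma_2(K)$.

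The inductive step $\gamma_{k+1}(L) = [L,\gamma_k(L)] = [K,\gamma_k(K)] + [d,\gamma_k(K)] = \gamma_{k+1}(K) + [d,\gamma_k(K)]$ reduces everything to controlling the action of $d$ on $K$, and this is the main obstacle. The resolution is that the left multiplication $f = \textbf{l}_d$ restricts to a derivation of the ideal $K$, so Lemma~\ref{lem 3} applies and presents $f$, in the basis $a_1, \dots, a_n$, as a lower-triangular operator with diagonal entries $\gamma_1, 2\gamma_1, \dots, n\gamma_1$. Since $L$ is nilpotent, each iterate $\textbf{l}_d^{\,m}(x)$ lies in $\gamma_{m+1}(L)$, so $\textbf{l}_d$, and hence $f$, is a nilpotent operator; its eigenvalues must all vanish, and the $(1,1)$-entry forces $\gamma_1 = 0$ in every characteristic. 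With $\gamma_1 = 0$, Lemma~\ref{lem 3} gives $f(a_j) = [d,a_j] \in Fa_{j+1} \oplus \cdots \oplus Fa_n = \gamma_{j+1}(K)$, hence $[d,\gamma_k(K)] \subseteq \gamma_{k+1}(K)$, and the inductive step closes to $\gamma_{k+1}(L) = \gamma_{k+1}(K)$.

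Consequently $\gamma_k(L) = \gamma_k(K)$ for all $k \geq 2$, so with $c = \textbf{ncl}(K)$ we get $\gamma_c(L) = \gamma_c(K) \neq \langle 0\rangle$ and $\gamma_{c+1}(L) = \gamma_{c+1}(K) = \langle 0\rangle$, i.e.\ $\textbf{ncl}(L) = \textbf{ncl}(K)$. The one point requiring care throughout is that even when the cyclic subalgebra generated by $d$ is two-dimensional (so $[d,d] \in \textbf{Leib}(L) = \textbf{Leib}(K) \subseteq K$), the decomposition $L = K \oplus Fd$ as vector spaces still holds and none of the bracket computations above are affected.
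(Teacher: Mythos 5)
Your proof is correct, and its overall skeleton --- establish $\gamma_k(L) = \gamma_k(K)$ for all $k \geq 2$, starting from $\textbf{Leib}(L) = \textbf{Leib}(K)$ and the observation that the two-dimensional nilpotent Lie algebra $L/\textbf{Leib}(L)$ is abelian --- coincides with the paper's. Where you genuinely diverge is the inductive step. The paper never computes $[d, \gamma_k(K)]$: it squeezes, noting that $\gamma_{k+1}(K) \leq \gamma_{k+1}(L) \leq \gamma_k(L) = \gamma_k(K)$ while $\dim_F\bigl(\gamma_k(K)/\gamma_{k+1}(K)\bigr) = 1$, so that $\gamma_{k+1}(L) \neq \gamma_{k+1}(K)$ would force $\gamma_{k+1}(L) = \gamma_k(L) \neq \left\langle 0\right\rangle$ and hence a stationary non-zero lower central series, contradicting the nilpotency of $L$. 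You instead control the action of $d$ explicitly: $\textbf{l}_d$ restricts to a derivation of the ideal $K$, Lemma~\ref{lem 3} presents it as a triangular operator with diagonal $(\gamma_1, 2\gamma_1, \ldots, n\gamma_1)$, and the nilpotency of $L$ makes $\textbf{l}_d$ a nilpotent operator, so the first diagonal entry forces $\gamma_1 = 0$ in every characteristic and $[d, \gamma_k(K)] \leq \gamma_{k+1}(K)$ follows. Both arguments are sound. The paper's is shorter and needs nothing beyond dimension counting; yours is heavier but extracts the additional fact that $[d, a_1] \in \textbf{Leib}(K)$, which is precisely the information the paper must re-derive (via the refined central series of $K$ and Corollary~\ref{cor 1}) at the beginning of the proof of Theorem~\ref{therem A}, so your route front-loads work that is needed anyway.
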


\begin{proof}
	Put  $ A = \textbf{Leib}(L) $. By Lemma~\ref{lem 6} $ A = \textbf{Leib}(K) $. Lemma~\ref{lem 1} implies that  $ \dim_F(L/K) = 1 $. The equality  $ \dim_F(K/A) = 1 $  implies that  $ \dim_F(L/A) = 2 $. In other words, the factor-algebra  $ L/A $  is a Lie algebra of dimension $ 2 $. Note that a nilpotent Lie algebra of dimension  $ 2 $  is abelian, so that  $ L/A $  is abelian. It follows that  $ \gamma_2(L) = [L, L] \leq A $. On the other hand, $ \gamma_2(K) = A $, and, therefore,  $ \gamma_2(L) = A $. Since  $ A $  is an ideal of  $ L $, $ \gamma_3(L) = [L, \gamma_2(L)] = [L, A] \leq A $. Clearly, $ \gamma_3(K) \leq \gamma_3(L) $. We note that  $ \dim_F(A/\gamma_3(K)) = 1 $. Therefore if we assume that  $ \gamma_3(K) \neq \gamma_3(L) $, then  $ \gamma_3(L) = A = \gamma_2(L) $, and we obtain a contradiction with the fact that  $ L $  is nilpotent. Using the similar arguments we obtain that  $ \gamma_j(K) = \gamma_j(L) $  for all  $ j \in \left\lbrace  2, \ldots , n\right\rbrace  $. It follows that  $ \textbf{ncl}(L) = n $. 
\end{proof}

\begin{lemma}\label{lem 7}
	Let  $ L $  be a nilpotent Leibniz algebra over a field  $ F $ having finite dimension  $ n + 1 \geq 3 $. Suppose that  $ L $  includes a maximal cyclic subalgebra  $ K $. Then  $ L $  has an element  $ d $  such that  $ L = K \oplus Fd $  and  $ [K, d] = \left\langle 0\right\rangle  $. 
\end{lemma}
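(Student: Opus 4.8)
The plan is to start from an arbitrary vector-space complement of $K$ and correct it by a suitable element of $K$. First, since $K$ is a maximal subalgebra of the nilpotent algebra $L$, it is an ideal by \cite[Theorem D]{KSuSe2018}, and Lemma~\ref{lem 1} applied to the quotient $L/K$ (which has no proper non-zero subalgebras, by maximality of $K$) gives $\dim_F(L/K) = 1$; hence $\dim_F(K) = n$ and we may fix the cyclic basis $a_1, \ldots, a_n$ of $K$ as in the hypothesis. Because $n \geq 2$, the algebra $K$ is non-abelian, so by the Corollary to Lemma~\ref{lem 6} we have $\textbf{Leib}(L) = \textbf{Leib}(K) = Fa_2 \oplus \ldots \oplus Fa_n$. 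Now choose any $e \in L \setminus K$, so that $L = K \oplus Fe$ as a vector space. The goal reduces to finding $k \in K$ for which $d := e - k$ satisfies $[a_i, d] = 0$ for every $i$; then $[K, d] = \left\langle 0\right\rangle$ by linearity, and $L = K \oplus Fd$ since $d \notin K$.

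Next I would determine the brackets $[a_i, e]$, all of which lie in $K$ because $K$ is an ideal. For $j \geq 2$ the element $a_j$ lies in $\textbf{Leib}(K) \leq \textbf{Leib}(L) \leq \zeta^{left}(L)$, and therefore $[a_j, e] = 0$. (Equivalently, $\textbf{r}_e$ restricts to a right derivation of $K$, and the preceding lemma on right derivations of cyclic nilpotent algebras forces $[a_j, e] = 0$ for $j \geq 2$.) The crucial point is the $a_1$-component of $[a_1, e]$. Here nilpotency enters: the factor algebra $L/\textbf{Leib}(L)$ is a Lie algebra of dimension $(n+1) - (n-1) = 2$, and a two-dimensional nilpotent Lie algebra is abelian, so $[L, L] \leq \textbf{Leib}(L)$. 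In particular $[a_1, e] \in \textbf{Leib}(L) = Fa_2 \oplus \ldots \oplus Fa_n$, so we may write $[a_1, e] = \rho_2 a_2 + \rho_3 a_3 + \ldots + \rho_n a_n$ with no $a_1$-term.

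Finally I would solve $[a_1, k] = [a_1, e]$ for $k \in K$. Since $[a_1, a_i] = a_{i+1}$ for $1 \leq i \leq n-1$ and $[a_1, a_n] = 0$, the choice $k = \rho_2 a_1 + \rho_3 a_2 + \ldots + \rho_n a_{n-1}$ yields $[a_1, k] = \rho_2 a_2 + \rho_3 a_3 + \ldots + \rho_n a_n = [a_1, e]$. Setting $d = e - k$ then gives $[a_1, d] = [a_1, e] - [a_1, k] = 0$, while for $j \geq 2$ we have $[a_j, d] = [a_j, e] - [a_j, k] = 0$, because $a_j \in \zeta^{left}(L)$ kills $[a_j, e]$ and $[a_j, a_i] = 0$ for $m > 1$ kills $[a_j, k]$. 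Hence $[K, d] = \left\langle 0\right\rangle$ and $L = K \oplus Fd$, as required.

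I expect the main obstacle to be ruling out an $a_1$-component in $[a_1, e]$, that is, the coefficient $\rho_1$: the correction step only adjusts components lying in $\textbf{Leib}(K)$, since $[a_1, k]$ never produces an $a_1$-term, so a nonzero $\rho_1$ could not be removed by subtracting an element of $K$. The two-dimensional abelian-quotient argument is exactly what closes this gap, and it is the only place where the nilpotency of $L$ is genuinely used.
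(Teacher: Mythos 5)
Your proposal is correct and follows essentially the same route as the paper: identify $K$ as an ideal via \cite[Theorem D]{KSuSe2018}, use Lemma~\ref{lem 6} to get $\textbf{Leib}(L)=\textbf{Leib}(K)$, observe that the two-dimensional nilpotent Lie algebra $L/\textbf{Leib}(L)$ is abelian so that $[a_1,e]$ has no $a_1$-component, and then subtract the shifted correction $k=\rho_2 a_1+\ldots+\rho_n a_{n-1}$ (the paper's $d=b-(\beta_2 a_1+\ldots+\beta_n a_{n-1})$), finishing with $\textbf{Leib}(L)\leq\zeta^{left}(L)$ to kill $[a_j,d]$ for $j\geq 2$. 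Your closing remark correctly identifies the one genuine obstacle (the possible $a_1$-term) and the exact place where nilpotency is used, matching the paper's argument.
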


\begin{proof}
	Being a maximal subalgebra of a nilpotent Leibniz algebra, $ K $  is an ideal of  $ L $~\cite[Theorem D]{KSuSe2018}. Lemma~\ref{lem 1} implies that  $ \dim_F(L/K) = 1 $, so that,  $ L = K \oplus Fb $. Put  $ A = \textbf{Leib}(L) $. By Lemma~\ref{lem 6},  $ A = \textbf{Leib}(K) $, so that,  $ Fa_2 \oplus \ldots \oplus Fa_n  = \textbf{Leib}(L) $. Being a nilpotent Lie algebra of dimension $ 2 $  $ L/A $  is abelian. Then  
	$$
	[a_1, b] = \beta_2a_2 + \beta_3a_3 + \ldots + \beta_{n - 1} a_{n - 1} + \beta_n a_n   
	$$
	for some elements  $ \beta_2, \beta_3, \ldots , \beta_{n - 1}, \beta_n \in F $.
	
	Put  $ d = b - (\beta_2 a_1 + \beta_3 a_2 + \ldots + \beta_{n - 1} a_{n - 2} + \beta_n a_{n - 1}) $.  Then  
	$$
	[a_1, d] = [a_1, b - (\beta_2 a_1 + \beta_3 a_2 + \ldots + \beta_{n - 1} a_{n - 2} + \beta_n a_{n - 1})]=
	$$
	$$
	[a_1, b] - [a_1, \beta_2 a_1] - [a_1, \beta_3 a_2] -  \ldots - [a_1, \beta_{n - 1} a_{n - 2}] - [a_1, \beta_n a_{n - 1}] =
	$$
	$$
	\beta_2a_2 + \beta_3a_3 + \ldots + \beta_{n - 1} a_{n - 1} + \beta_n a_n - \beta_2 a_2 - \beta_3 a_3 - \ldots - \beta_{n - 1} a_{n - 1} - \beta_n a_n = 0.
	$$
	Furthermore, the inclusion  $ A \leq \zeta^{left}(L) $  implies that  $ [A, d] = \left\langle 0\right\rangle  $. Therefore,  $ [K, d] = \left\langle 0\right\rangle  $. 
\end{proof}

\begin{center}
		\textbf{Proof  of  Theorem~\ref{therem A}}
\end{center}

Being a maximal subalgebra of a nilpotent Leibniz algebra, $ K $  is an ideal of  $ L $ ~\cite[Theorem D]{KSuSe2018}. Lemma~\ref{lem 1} implies that  $ \dim_F(L/K) = 1 $. Being a nilpotent cyclic subalgebra, $ K $  has a basis  $ \left\lbrace  a_1, a_2, \ldots , a_n \right\rbrace  $  such that  
$$
[a_1, a_1] = a_2, [a_1, a_2] = a_3, . . . , [a_1, a_{n - 1} ] = a_n, [a_1, a_n] = 0, [a_m, a_k] = 0 
$$
for all $ m > 1, 1 \leq k \leq n $, ~\cite[Theorem 1.1]{CKSu2017}. Put  $ A = Fa_2 \oplus \ldots \oplus Fa_n  = \textbf{Leib}(K) $. By Lemma~\ref{lem 6} $ A = \textbf{Leib}(L) $, so that  $ Fa_2 \oplus \ldots \oplus Fa_n  = \textbf{Leib}(L) $. 

Put  $ Z_1 = Fa_n, Z_2 = Fa_n + Fa_{n - 1}, \ldots , Z_{n - 1} = Fa_n + \ldots + Fa_2, Z_n = K $. Then 
$$
\left\langle 0\right\rangle  = Z_0 \leq Z_1 \leq Z_2 \leq  \ldots \leq Z_{n - 1} \leq Z_n = K
$$
is the upper central series of  $ K $. 

If  $ w $  is an arbitrary element of  $ L $, then the mapping  $ \textbf{l}_w: K \longmapsto K $, which is defined by the rule  $ \textbf{l}_w(x) = [w, x] $, $ x \in K $, is a derivation of  $ K $. Then Corollary~\ref{cor 1} implies that  $ Z_j $  is a left ideal of  $ L $ for  $ j \in \left\lbrace  0, 1, \ldots , n - 1 \right\rbrace  $. On the other hand, $ Z_{n - 1} = \textbf{Leib}(L) \leq \zeta^{left}(L) $, so that $ Z_j $ is also a right ideal and therefore a two–sided ideal of  $ L $  for  $ j \in \left\lbrace  0, 1, \ldots , n - 1 \right\rbrace  $. 

Since  $ L $  is nilpotent, the fact that  $  Z_{j + 1}/Z_j  $ has dimension  $ 1 $  implies that                    $ Z_{j + 1}/Z_j \leq \zeta(L/Z_j) $  for  $ j \in \left\lbrace  0, 1, \ldots , n - 1 \right\rbrace  $ ~\cite[Theorem D]{KSuSe2018}. 

By Lemma~\ref{lem 7},  $ L = K \oplus Fd $  where  $ d $  is an element of  $ L $  such that  $ [K, d] = \left\langle 0\right\rangle  $. We can apply Lemma~\ref{lem 3} to the mapping  $ \textbf{l}_d $  and obtain that 
\begin{align*}
    \left[ d, a_1\right]  &=& \gamma_2a_2 + \gamma_3a_3 + \gamma_4a_4 + \ldots + \gamma_{n - 1} a_{n - 1} + \gamma_na_n, \\
	\left[ d, a_2\right]  &=&            \gamma_2a_3  + \gamma_3a_4 + \ldots + \gamma_{n - 2}a_{n - 1}  + \gamma_{n - 1}a_n, \\
	\left[ d, a_3\right]  &=&                        \gamma_2a_4  + \ldots + \gamma_{n - 3}a_{n - 1}  +  \gamma_{n - 2}a_n, \\
	\ldots \\
	\left[ d, a_{n - 1}\right] &=&                                       \gamma_2a_n,\\
	\left[ d, a_n\right] &=&  0. 
\end{align*}
Suppose that  $ \gamma_2 = \gamma_3 = \ldots = \gamma_{n - 1} =  \gamma_n = 0 $. Then  $ [d, K] = \left\langle 0\right\rangle  $. Here we obtain two cases: $ [d, d] = 0 $  and  $ [d, d] \neq 0 $. In the first case,  $ \left\langle  d \right\rangle  = Fd $, and  $ L = K \oplus \left\langle  d \right\rangle  $ where  $ K $  and  $ \left\langle  d \right\rangle = Fd$  are ideals. In the second case,  $ \left\langle  d \right\rangle  = Fd \oplus F[d, d] $  is a nilpotent subalgebra of dimension $ 2 $, moreover, $ \left\langle  d \right\rangle  $  is an ideal of $ L $ and $ L = K + \left\langle  d \right\rangle  $, $ K \cap \left\langle  d \right\rangle  = \zeta(L) $. In particular, if  $ n = 3 $, then  $ L $  is the sum of two ideals, that are cyclic subalgebras of dimension $ 2 $, and their intersection is the center of  $ L $. In this algebra every subalgebra is an ideal. Thus, we can consider this Leibniz algebra as an analog of the quaternion group. 

Suppose now that there are non-zero elements among the elements  $ \gamma_2, \gamma_3, \ldots , \gamma_{n - 1}, \gamma_n $. Let  $ t $  be the least index such that  $ \gamma_t \neq 0 $. Then 
\begin{align*}
	[d, a_1] &=& \gamma_ta_t +  \gamma_{t + 1} a_{t + 1} + \gamma_{t + 2} a_{t + 2} + \ldots + \gamma_{n - 1} a_{n - 1}   + \gamma_na_n,\\
	[d, a_2] &=&             \gamma_t a_{t + 1}  + \gamma_{t + 1}a_{t + 2} + \ldots + \gamma_{n - 2}a_{n - 1}  + \gamma_{n - 1}a_n,\\
	[d, a_3] &=&                             \gamma_t a_{t + 2}  + \ldots + \gamma_{n - 3}a_{n - 1}  +  \gamma_{n - 2}a_n,\\
	\ldots\\
	[d, a_{n - t + 1}] &=&                                                   \gamma_ta_n,\\
\end{align*}

$[d, a_j] =  0$  for  $j > n - t + 1. $

Without loss of generality we may assume that  $ \gamma_t = 1 $. Otherwise, because $ [K, \gamma_t^{-1}d] = \gamma_t^{-1}[K, d] = \left\langle 0\right\rangle $, instead of the element  $ d $  we will consider the element  $ \gamma_t^{-1}d $.

The equality  $ A = \textbf{Leib}(L) $  implies that  $ [d, d] \in A $, so that  $ [d, d] = \sum_{ 2 \leq j \leq n} \delta_ja_j $. Since  $ [a_1, d] = 0 $, $ [a_1, [d, d]] = 0 $, we have  
$$
[a_1, [d, d]] = [[a_1, d], d] + [d, [a_1, d]] = 0.
$$
Furthermore,
$$
[a_1, [d, d]] = [a_1, \delta_2a_2 + \delta_3a_3 + \ldots + \delta_{n - 1} a_{n - 1} + \delta_na_n] = \delta_2a_3 + \delta_3a_4 + \ldots + \delta_{n - 1}a_n.
$$
The elements  $ a_3, a_4,  \ldots , a_n $  are linearly independent, and therefore the equality    
$$
\delta_2a_3 + \delta_3a_4 + \ldots + \delta_{n - 1}a_n = 0  
$$
implies  $ \delta_2 = \delta_3 = \ldots = \delta_{n - 1} = 0 $. Hence $ [d, d] = \delta_na_n $.

If we put  $ x = \delta_n a_{n - t + 1} $, then we obtain  
$$
[d - x, d - x] = [d, d] - [d, x] - [x, d] + [x, x] = [d, d] - [d, x] = \delta_na_n - [d, \delta_n a_{n - t + 1}] =
$$
$$
\delta_na_n - \delta_n [d, a_{n - t + 1}] =  \delta_na_n - \delta_na_n  = 0.
$$

Put  $ s = d - x $. Then  $ [s, s] = 0 $, so that  $ \left\langle  s \right\rangle  = Fs $. Furthermore,                                                                                    
$$[a_1, s] = [a_1, d - x] = [a_1, d] - [a_1, x] = 0 - [a_1, \delta_n a_{n - t + 1}] = - \delta_n a_{n - t} = \tau a_{n - t},$$
$$
[a_j, s] = [a_j, d - x] = [a_j, d] - [a_j, \delta_n a_{n - t + 1},] = 0  \mbox{  whenever  }   j \geq 2.
$$
Since  $ n - t + 1 \geq 2 $, 
$$
[s, a_1] = [d - x, a_1] = [d, a_1] - [x, a_1] = [d, a_1] - [\delta_n a_{n - t + 1}, a_1] = [d, a_1],
$$
$$
[s, a_j] = [d - x, a_1] = [d, a_j] - [x, a_j] = [d, a_j] - [\delta_n a_{n - t + 1}, a_j] = [d, a_j]  
$$
whenever  $ j \geq 2 $.

\section{Non-nilpotent  Leibniz  algebras, having  maximal  cyclic  subalgebras}

\begin{center}
	\textbf{Proof  of  Theorem~\ref{theorem B}}
\end{center}

Since  $ \dim_F(L/K) = 1 $,  $ L = K \oplus Fb $  for some element  $ b $. Being a nilpotent cyclic subalgebra, $ K $  has a basis  $ \left\lbrace  a_1, a_2, \ldots , a_n \right\rbrace  $  such that  for all $ m > 1 $, $ 1 \leq k \leq n $,
$$
[a_1, a_1] = a_2, [a_1, a_2] = a_3, \ldots , [a_1, a_{n - 1} ] = a_n, [a_1, a_n] = 0, [a_m, a_k] = 0  
$$
\cite[Theorem 1.1]{CKSu2017}. Put  $ A = Fa_2 \oplus \ldots \oplus Fa_n  = \textbf{Leib}(K) $. By Lemma~\ref{lem 6} $ A = \textbf{Leib}(L) $, so that  $ Fa_2 \oplus \ldots \oplus Fa_n  = \textbf{Leib}(L) $. We have  $ [K, K] = A $. If we suppose that  $ L/A $  is nilpotent, then  $ L $  is itself nilpotent~\cite[Theorem~3.1]{RCGHHZ2014}. It follows that 
$$
[b, a_1] = \beta_1a_1 + \beta_2a_2 + \beta_3a_3 + \beta_4a_4 + \ldots + \beta_{n - 1} a_{n - 1} + \beta_na_n
$$
for some coefficients  $ \beta_1, \beta_2, \ldots , \beta_{n - 1},  \beta_n \in F $, where $ \beta_1 \neq 0 $. Without loss of generality we may assume that   $ \beta_1 = 1 $. Indeed, if $ \beta_1 \neq 1 $, then we consider the element  $ b_1 = \beta_1^{-1}b $. Clearly $ L = K \oplus Fb_1 $. 

Since $ L/A $  is a Lie algebra, we obtain 
$$
[a_1, b] = - a_1 + \sigma_2a_2 + \sigma_3a_3 + \ldots + \sigma_{n - 1} a_{n - 1} + \sigma_na_n
$$
for some elements  $ \sigma_2, \sigma_3, \ldots, \sigma_{n - 1}, \sigma_n \in F $. 

Put  $ d = b - (\sigma_2a_1 + \sigma_3a_2 + \ldots + \sigma_{n - 1} a_{n - 2} + \sigma_na_{n - 1}) $.  Then  
$$
[a_1, d] = [a_1, b - (\sigma_2a_1 + \sigma_3a_2 + \ldots + \sigma_{n - 1} a_{n - 2} + \sigma_na_{n - 1})] =
$$
$$
[a_1, b] - [a_1, \sigma_2a_1] - [a_1, \sigma_3a_2] - \ldots - [a_1, \sigma_{n - 1} a_{n - 2}] - [a_1, \sigma_n a_{n - 1}] =
$$
$$
- a_1 + \sigma_2a_2 +  \ldots + \sigma_{n - 1} a_{n - 1} + \sigma_na_n - \sigma_2a_2  - \ldots - \sigma_{n - 1} a_{n - 1} - \sigma_na_n = -a_1.
$$
Furthermore, the inclusion  $ A \leq \zeta^{left}(L) $  implies that  $ [A, d] = \left\langle 0\right\rangle  $. 

If  $ w $  is an arbitrary element of  $ L $, then the mapping  $ \textbf{l}_w: L \longmapsto L $, defined by the rule  $ \textbf{l}_w(x) = [w, x] $, $ x \in K $, is a derivation of  $ K $. Then Corollary~\ref{cor 1} implies that  $ Z_j $  is a left ideal of  $ L $ for  $ j \in \left\lbrace  0, 1, \ldots , n - 1 \right\rbrace  $. On the other hand, $ Z_{n - 1} = \textbf{Leib}(L) \leq \zeta^{left}(L) $, so that  $ Z_j $  is also a right ideal and therefore two-sided ideal of  $ L $  for  $ j \in \left\lbrace  0, 1, . . . , n - 1 \right\rbrace  $. 

We can apply Lemma~\ref{lem 3} to the mapping  $ l_d $  and obtain that 
\begin{align*}
	[d, a_1] &=& a_1 + \gamma_2a_2 + \gamma_3a_3 + \ldots + \gamma_{n - 1} a_{n - 1} + \gamma_n a_n,\\
	[d, a_2] &=&        2a_2 + \gamma_2a_3  + \ldots + \gamma_{n - 2}a_{n - 1}  + \gamma_{n - 1}a_n,\\
	[d, a_3] &=&                    3a_3  + \ldots + \gamma_{n - 3}a_{n - 1}  + \gamma_{n - 2}a_n,\\
	\ldots\\
	[d, a_{n - 1}] &=&                                    (n - 1)a_{n - 1}  + \gamma_2a_n,\\
	[d, a_n] &=&                                                                na_n. 
\end{align*}
The equality  $ A = \textbf{Leib}(L) $  implies that  $ [d, d] \in A $, so that 
$$
[d, d] = \delta_2a_2 + \delta_3a_3 + \ldots + \delta_{n - 1} a_{n - 1} + \delta_na_n.
$$
We have  
$$
[a_1, [d, d]] = [[a_1, d], d] + [d, [a_1, d]]. 
$$
Furthermore
$$
[a_1, [d, d]] = [a_1, \delta_2a_2 + \delta_3a_3+ \ldots + \delta_{n - 1} a_{n - 1} + \delta_na_n] = \delta_2a_3 + \delta_3a_4 + \ldots + \delta_{n - 1}a_n.
$$
$$
[[a_1, d], d] = [- a_1, d] = a_1,
$$
$$
[d, [a_1, d_]] = [d, -a_1] = -[d, a_1] = -(a_1 + \gamma_2a_2 + \gamma_3a_3 +  \ldots + \gamma_{n-1} a_{n - 1} + \gamma_na_n).
$$
Thus, we obtain
$$
\delta_2a_3 + \delta_3a_4 + \ldots + \delta_{n - 1}a_n = a_1 - (a_1 + \gamma_2a_2 + \gamma_3a_3 + \gamma_4a_4 + \ldots + \gamma_{n - 1} a_{n - 1} + \gamma_na_n) = 
$$
$$
- \gamma_2a_2 - \gamma_3a_3 - \gamma_4a_4 - \ldots - \gamma_{n - 1} a_{n - 1} - \gamma_na_n.
$$
It follows that $  \gamma_2 = 0, \delta_2 = - \gamma_3, \delta_3 = - \gamma_4, \ldots , \delta_{n - 2} = - \gamma_{n - 1}, \delta_{n - 1} = - \gamma_n $. Thus
$$
[d, d] = - (\gamma_3a_2 + \gamma_4a_3 + \ldots + \gamma_na_{n - 1}) + \delta_na_n.
$$

\begin{center}
	\textbf{Proof of Theorem~\ref{theorem C}}
\end{center}

Being a nilpotent cyclic subalgebra, $ K $  has a basis  $ \left\lbrace  a_1, a_2, \ldots , a_n \right\rbrace   $ such that  
$$
[a_1, a_1] = a_2, [a_1, a_2] = a_3, . . . , [a_1, a_{n - 1} ] = a_n, [a_1, a_n] = 0, [a_m, a_k] = 0  
$$
for all $ m > 1, 1 \leq k \leq n $,~\cite[Theorem 1.1]{CKSu2017}. Put  $ A = Fa_2 \oplus \ldots \oplus Fa_n  = \textbf{Leib}(K) $. By Lemma~\ref{lem 6},  $ A = \textbf{Leib}(L) $, so that  $ Fa_2 \oplus \ldots \oplus Fa_n  = \textbf{Leib}(L) $. We have also  $ [K, K] = A $. Using Theorem~\ref{theorem B} we obtain that  $ L $  contains an element  $ d $  such that  $ L = K \oplus Fd $  and  $ [a_1, d] = - a_1 $, $ [a_j, d] = 0 $ whenever  $ j \geq 2 $. Theorem~\ref{theorem B} shows also that 
\begin{align*}
	[d, a_1] &=& a_1 + \gamma_2a_2 + \gamma_3a_3 + \ldots + \gamma_{n - 1} a_{n - 1} + \gamma_n a_n,\\
	[d, a_2] &=&        2a_2 + \gamma_2a_3  + \ldots + \gamma_{n - 2}a_{n - 1}  + \gamma_{n - 1}a_n,\\
	[d, a_3] &=&                    3a_3  + \ldots + \gamma_{n - 3}a_{n - 1}  + \gamma_{n - 2}a_n,\\
	\ldots\\
	[d, a_{n - 1}] &=&                                    (n - 1)a_{n - 1}  + \gamma_2a_n,\\
	[d, a_n] &=&                                                                na_n. 
\end{align*}

We chose an element  $ x \in A $  such that $ [d, d] = [d, x] $.  Put  $ x = \sum_{ 2 \leq j \leq n}  \lambda_j a_j $, then  
 $$ [d, x] = \left[ d, \sum_{ 2 \leq j \leq n}  \lambda_j a_j\right]  = \sum_{ 2 \leq j \leq n}  \lambda_j [d, a_j] = $$
 $$\lambda_2(2a_2 + \gamma_2a_3  + \gamma_3a_4  + \gamma_4a_5  + \ldots + \gamma_{n - 2}a_{n - 1} + \gamma_{n - 1}a_n) + $$
$$\lambda_3(3a_3  + \gamma_2a_4  + \gamma_3a_5 + \ldots + \gamma_{n - 3}a_{n - 1} + \gamma_{n - 2}a_n) + $$
$$\lambda_4(4a_4  + \gamma_2a_5  + \gamma_3a_6 + \ldots + \gamma_{n - 2}a_{n - 1}  + \gamma_{n- 3}a_n) + \ldots +$$
$$\lambda_{n - 1} ((n - 1)a_{n - 1}  + \gamma_2a_n) + n\lambda_n a_n =$$
$$2\lambda_2a_2  + (\lambda_2\gamma_2 + 3\lambda_3)a_3 + (\lambda_2\gamma_3 + \lambda_3\gamma_2 + 4\lambda_4)a_4   + (\lambda_2\gamma_4 + \lambda_3\gamma_3 + \lambda_4\gamma_2 + 5\lambda_5)a_5  + \ldots +$$
$$(\lambda_2\gamma_{n - 2} + \lambda_3\gamma_{n - 3} + \lambda_4\gamma_{n - 4}  + \ldots + \lambda_{n - 2}\gamma_2 + (n - 1)\lambda_{n - 1})a_{n - 1} + $$
$$(\lambda_2\gamma_{n - 1} + \lambda_3\gamma_{n - 2}  + \lambda_4\gamma_{n - 3} + \ldots + \lambda_{n - 1}\lambda_2 + n\lambda_n)a_n. $$

By Theorem~\ref{theorem B},  $ [d, d] = - (\gamma_3a_2 + \gamma_4a_3 + \ldots +  \gamma_na_{n - 1}) + \delta_na_n $, and therefore from the equality $ [d, d] = [d, x] $  we obtain the system of linear equations 
\begin{align*}
	-\gamma_3 &=   	       2\lambda_2\\
	-\gamma_4 &=    \gamma_2   \lambda_2+ 		        3\lambda_3 \\
	-\gamma_5 &=    \gamma_3   \lambda_2 +       \gamma_2 \lambda_3+                4\lambda_4\\
	\ldots &\ldots\ldots\\ 
	-\gamma_n &= \gamma_{n - 2}\lambda_2  + \gamma_{n - 3}\lambda_3  + \gamma_{n - 4}\lambda_4  + \ldots + \gamma_2\lambda_{n - 2} +  (n - 1)\lambda_{n - 1}\\
	\delta_n  &= \gamma_{n - 1}\lambda_2  + \gamma_{n - 2}\lambda_3  + \gamma_{n - 3}\lambda_4  + \ldots + \gamma_3\lambda_{n - 2} + \gamma_2\lambda_{n - 1} +  n \lambda_n
\end{align*}
The last system has a nonsingular matrix, therefore it has a unique solution $ \lambda_j , 2 \leq j \leq n $. Thus, we have shown that an element $ x $ satisfying the equality $[d, d] = [d, x] $ exists.
 
We have now
$$
[d - x, d - x] = [d, d] - [d, x] - [x, d] + [x, x] = [d, d] - [d, x] = 0.
$$
Since  $ x \in A = \zeta^{left}(L) $, $ [x, L] = \left\langle 0\right\rangle  $, therefore  
$$
[d - x, a_j] = [d, a_j] - [x, a_j] = [d, a_j]  \mbox{ for all }  j, 1 \leq j \leq n.
$$

Put  $ s = d - x $, then  $ [s, s] = 0 $ and $ [s, a_j] =[d, a_j] $  for all  $ j, 1 \leq j \leq n $. 

If  $ b = a_1 + \lambda_2a_3 + \lambda_3a_4 + \ldots + \lambda_{n - 2}a_{n - 1}  + \lambda_{n - 1}a_n $, then 
$$
[b, s] = [a_1 + \lambda_2a_3 + \lambda_3a_4 + \ldots + \lambda_{n - 2}a_{n - 1}  + \lambda_{n - 1}a_n, d - x] =
$$
$$
[a_1, d] - [a_1, x] = -a_1 - [a_1, \lambda_2a_2 + \lambda_3a_3 + \lambda_4a_4 + \ldots + \lambda_{n - 1} a_{n - 1} + \lambda_n a_n] =
$$
$$
-a_1 - \lambda_2a_3 - \lambda_3a_4 - \lambda_4a_5 - \ldots - \lambda_{n - 2}a_{n - 1} - \lambda_{n - 1}a_n = - b.
$$
Furthermore, $ [s, [b, s]] = [[s, b], s] + [b, [s, s]] = [[s, b], s] $. On the other hand,  $ [s, [b, s]] = [s, - b] = -[s, b] $, and we obtain that  $ [[s, b], s] = -[s, b] $. We have 
$$
[s, b] = [d - x, b] = [d, b] = a_1 + \nu_2a_2 + \nu_3a_3 + \ldots + \nu_{n - 1}a_{n - 1}  + \nu_na_n.
$$
for some coefficients  $ \nu_2, \nu_3, \ldots , \nu_{n - 1}, \nu_n \in F $. We note that the element  
$$
c = \nu_2a_2 + \nu_3a_3 + \ldots + \nu_{n - 1}a_{n - 1}  + \nu_na_n
$$
belong to  $ A = \textbf{Leib}(L) \leq \zeta^{left}(L) $. Now we obtain 
$$
[[s, b], s] = [a_1 + \nu_2a_2 + \nu_3a_3 + \ldots + \nu_{n - 1}a_{n - 1}  + \nu_na_n, d - x] = [a_1 + c, d - x] =
$$
$$
[a_1 + c, d] - [a_1 + c, x] = - a_1  - [a_1 + c, x] = - a_1  - [a_1, x] =
$$
$$
- a_1  - [a_1, \lambda_2a_2 + \lambda_3a_3 + \ldots + \lambda_{n - 1} a_{n - 1} +  \lambda_n a_n] =
$$
$$
- a_1  - \lambda_2a_3 - \lambda_3a_4 - \ldots - \lambda_{n - 2} a_{n - 1} - \lambda_{n - 1}a_n = - b. 
$$
It follows that  $ [s, b] =  - [[s, b], s] = b $. 

Put  $ b_1 = b, b_2 = [b_1, b_1], b_j = [b_1, b_{j - 1}], 3 \leq j \leq n $, then 
$$
[s, b_2] = [s, [b_1, b_1]] = [[s, b_1], b_1] + [b_1, [s, b_1]] = [b_1, b_1] + [b_1, b_1] = 2[b_1, b_1] = 2b_2,  
$$
$$ 
[s, b_3] = [s, [b_1, b_2]] = [[s, b_1], b_2] + [b_1, [s, b_2]] = [b_1, b_2] + [b_1, 2b_2] = 3[b_1, b_2] = 3b_3. 
$$
Using an ordinary induction, we obtain that  $ [s, b_j] = jb_j $  for all  $ j \leq n $.

We have 
$$b_2 = [a_1 + \lambda_2a_3 + \lambda_3a_4 + \ldots + \lambda_{n - 1}a_n, a_1 + \lambda_2a_3 + \lambda_3a_4 + \ldots +  \lambda_{n - 1}a_n] =
$$
$$ [a_1, a_1 + \lambda_2a_3 + \lambda_3a_4 + \ldots + \lambda_{n - 2}a_{n - 1} + \lambda_{n - 1}a_n] = a_2 + \lambda_2a_4 + \lambda_3a_5 + \ldots + \lambda_{n - 2}a_n,
$$

$$
b_3 = [a_1 + \lambda_2a_3 + \lambda_3a_4 + \ldots + \lambda_{n - 2}a_{n - 1} + \lambda_{n - 1}a_n, a_2 + \lambda_2a_4 + \lambda_3a_5 + \ldots + \lambda_{n - 2}a_n] =
$$
$$ 
[a_1, a_2 + \lambda_2a_4 + \lambda_3a_5 + \ldots + \lambda_{n - 2}a_n] = a_3 + \lambda_2a_5 + \lambda_3a_6 + \ldots + \lambda_{n - 3}a_n, \ldots  , 
$$

$$
b_{n - 3} = [b_1, b_{n - 4}] = [a_1 + \lambda_2a_3 + \lambda_3a_4 + \ldots + \lambda_{n - 1}a_n, a_{n - 4} + \lambda_2a_{n - 2} + \lambda_3a_{n - 1} + \lambda_4a_n] = 
$$
$$
[a_1, a_{n - 4} + \lambda_2a_{n - 2} + \lambda_3a_{n - 1} +\lambda_4a_n] = a_{n - 3} + \lambda_2a_{n - 1} + \lambda_3a_n,  
$$

$$
b_{n - 2} = [b_1, b_{n - 3}] = [a_1 + \lambda_2a_3 + \lambda_3a_4 + \ldots + \lambda_{n - 1}a_n, a_{n - 3} + \lambda_2a_{n - 1} + \lambda_3a_n]=
$$
$$
[a_1, a_{n - 3} + \lambda_2a_{n - 1} + \lambda_3a_n] = a_{n - 2} + \lambda_2a_n, 
$$

$$
b_{n - 1} = [b_1, b_{n - 2}] = [a_1 + \lambda_2a_3 + \lambda_3a_4 + \ldots + \lambda_{n - 1}a_n, a_{n - 2} + \lambda_2a_n] = 
$$
$$
[a_1, a_{n - 2} + \lambda_2a_n] = a_{n - 1},  
$$

$$
b_n = [b_1, b_{n - 1}] = [a_1 + \lambda_2a_3 + \lambda_3a_4 + \ldots + \lambda_{n - 1}a_n, a_{n - 1}] = 
$$
$$
[a_1, a_{n - 1}] = a_n, [b_1, b_n] = [b_1, a_n] = 0.
$$

Thus, we see that  transition from the system of elements $ \left\lbrace  a_1, a_2, \ldots , a_{n - 1}, a_n \right\rbrace  $  to   $ \left\lbrace  b_1, b_2, \ldots, b_{n - 1}, b_n \right\rbrace  $  is provided by the matrix. 

$$\begin{pmatrix}
1 & 0 & \lambda_2 & \lambda_3 & \lambda_4 & \cdots &  \lambda_{n-3} & \lambda_{n-2} & \lambda_{n-1}\\
0 & 1 & 0 & \lambda_2 & \lambda_3 & \cdots &  \lambda_{n-4} & \lambda_{n-3} & \lambda_{n-2}\\
0 & 0 & 1 &      0    & \lambda_2 & \cdots &  \lambda_{n-5} & \lambda_{n-4} & \lambda_{n-3}\\
\cdots & \cdots & \cdots & \cdots & \cdots & \cdots & \cdots& \cdots        & \cdots\\
0 & 0 & 0 &      0    & 0         & \cdots &       1        &       0       & \lambda_{2}\\
0 & 0 & 0 &      0    & 0         & \cdots &       0        &       1       & 0\\
0 & 0 & 0 &      0    & 0         & \cdots &       0        &       0       & 1\\
\end{pmatrix}$$
This matrix is nonsingular, which means that the elements $  b_1, b_2, \ldots, b_{n - 1}, b_n  $  form a basis of $ K $.

Kurdachenko, L. A. and Chupordia V.A., Department of Algebra and Geometry,
Oles Honchar Dnipro National University, Gagarin prospect 72, Dnipro 10, 49010, Ukraine.
e-mail: lkurdachenko@i.ua, vchupordia@gmail.com

Subbotin, I.Ya., Mathematics Department , National University,
5245 Pacific Concourse Drive, Los Angeles, CA,90045, U.S.A.
e-mail: isubboti@nu.edu

\end{document}